\documentclass[11pt,a4paper]{amsart}
\usepackage[T1]{fontenc}
\usepackage[utf8]{inputenc}
\usepackage{lmodern}
\usepackage{amsmath,amssymb,mathtools}
\usepackage{microtype}
\usepackage[left=27mm,right=27mm,top=26mm,bottom=26mm]{geometry}
\usepackage{enumitem}
\usepackage[hidelinks,unicode,pdfencoding=auto]{hyperref}

\numberwithin{equation}{section}
\newtheorem{theorem}{Theorem}[section]
\newtheorem{lemma}[theorem]{Lemma}
\newtheorem{proposition}[theorem]{Proposition}
\newtheorem{corollary}[theorem]{Corollary}
\theoremstyle{remark}
\newtheorem{remark}[theorem]{Remark}
\newcommand{\R}{\mathbb R}
\newcommand{\N}{\mathbb N}
\newcommand{\Z}{\mathbb Z}
\newcommand{\cS}{\mathcal S}
\newcommand{\cC}{\mathcal C}
\newcommand{\cK}{\mathcal K}
\newcommand{\dd}{\,\mathrm dt}

\newcommand{\dist}{\operatorname{dist}}
\newcommand{\rad}{\operatorname{rad}}

\newcommand{\doi}[1]{\href{https://doi.org/#1}{\nolinkurl{#1}}}
\DeclarePairedDelimiter{\norm}{\lVert}{\rVert}

\setlist[enumerate]{label=\textup{(\roman*)},leftmargin=2.1em,itemsep=3pt,topsep=5pt}
\allowdisplaybreaks[2]
 
\title[The minimal period solution for even potentials]{The minimal period solution of Hamiltonian systems with even potentials}
\author[Huagui Duan]{Huagui Duan}
\thanks{This work was partially supported by NSFC (No. 12631007) and NSF of Tianjin (No. 25JCZDJC01030).}
\address{Huagui Duan, School of Mathematics, Jilin University, Changchun, Jinlin, 130012, The People's Republic of China}
\email{duanhg@nankai.edu.cn}
\author[Yan Wu]{Yan Wu}
\address{Yan Wu, School of Mathematical Sciences, Nankai University, Tianjin 300071, The People's Republic of China}
\date{October 21, 2026}
\subjclass[2020]{34C25, 58E05}
\keywords{Hamiltonian systems, minimal period, even potential, Ambrosetti--Rabinowitz condition}

\begin{document}
\begin{abstract}
We consider the autonomous system $\ddot{q}+\nabla V(q)=0$ in an arbitrary finite-dimensional Euclidean space. The potential is assumed to be even, nonnegative and of class $C^2$, to satisfy $V(x)=o(|x|^2)$ at the origin, and the Ambrosetti--Rabinowitz superquadratic inequality outside a ball. We prove that, for every prescribed $T>0$, the system has a nonconstant solution with $T$ being its minimal positive period.
\end{abstract}
\maketitle

\section{Introduction and main result}\label{sec:introduction}

The prescribed minimal period problem asks whether every $T>0$ occurs as the least positive period of a nonconstant solution of an autonomous Hamiltonian system.

Consider the following second-order mechanical system
\begin{equation}\label{eq:system}
 \ddot{q}(t)+\nabla V(q(t))=0,\qquad q(t)\in\R^n,
\end{equation}
where $\dot{q}(t)=\frac{d}{dt}q(t)$, $V:\mathbb{R}^n\rightarrow\mathbb{R}$ is the potential energy, and $\nabla V(q)$ denotes the gradient of $V$ with respect to $q\in\mathbb{R}^n$.

Throughout this paper, the following assumptions about the potential $V$ will be used:
\begin{itemize}
\item[($\mathrm{V1}$)] \, $V\in C^1(\R^n,\R)$,\vspace{1mm}
\item[($\mathrm{V1'}$)] \, $V\in C^2(\R^n,\R)$,\vspace{1mm}
\item[($\mathrm{V2}$)] $\, \text{there exist }\mu>2,\ r_0>0\text{ such that}$ $0<\mu V(q)\le\langle\nabla V(q),q\rangle$ for $|q|\ge r_0$,\vspace{1mm}
\item[($\mathrm{V3}$)] $\, V(q)=o(|q|^2)\quad\text{as }q\to0$, i.e. $\lim_{|q|\rightarrow 0}\frac{V(q)}{|q|^2}=0$, \vspace{1mm}
\item[($\mathrm{V4}$)] $\,V(q)\ge0\quad\text{for every }q\in\R^n$,\vspace{1mm}
\item[($\mathrm{V5}$)] $\, V(-q)=V(q)\quad\text{for every }q\in\R^n$.
\end{itemize}

In 1978, Rabinowitz~\cite{Rabinowitz1978} proved the existence of nonconstant $T$-periodic solutions if the potential satisfies ($\mathrm{V1}$)-($\mathrm{V4}$). Note that a $T$-periodic solution can also be an iterate of an orbit with a shorter period, so this existence statement alone does not determine its minimal period. Rabinowitz also asked whether, under $(\mathrm{V1})$--$(\mathrm{V4})$, the system admits a nonconstant solution with any prescribed minimal period.

The system \eqref{eq:system} can be written as
\[
 \begin{pmatrix}\dot p(t)\\ \dot q(t)\end{pmatrix}
 =\begin{pmatrix}-\nabla V(q(t))\\p(t)\end{pmatrix}
 =J\nabla H(x),
\]
where
\[
 J=\begin{pmatrix}0&-I\\I&0\end{pmatrix},\qquad
 x=(p,q)\in\R^{2n},\qquad
 H(p,q)=\frac12|p|^2+V(q),
\]
and $I$ is the $n\times n$ identity matrix. Here
$\nabla H(x)=(p,\nabla V(q))$. Thus \eqref{eq:system} is equivalent to the
first-order Hamiltonian system
\begin{equation}\label{1-order-eq}
 \dot x=J\nabla H(x),\qquad x\in\R^{2n}.
\end{equation}

For convex first-order Hamiltonian systems, Ekeland and Hofer~\cite{EkelandHofer1985} obtained prescribed minimal-period solutions by relating the variational characterization of a critical point to its index and local geometry. For the second-order problem with the potential $V$ satisfying ($\mathrm{V1'}$) and ($\mathrm{V2}$)-($\mathrm{V5}$), Long~\cite{Long1993} worked in a space of time-even, half-antiperiodic loops and proved that a suitable symmetric mountain pass solution has minimal period either $T$ or $T/3$ (also cf. Theorem 2.1 of \cite{Long2021}). The possibility of a triple iterate is the specific issue addressed here.

In addition, there are also prescribed minimal-period results under additional radial hypotheses. Xiao~\cite{Xiao2010} treated even potentials satisfying a global radial Hessian inequality. Xiao and Zhu~\cite{XiaoZhu2025} obtained related conclusions under a weak Nehari condition; the global differential inequality used there should be distinguished from an Ambrosetti--Rabinowitz condition imposed only outside a ball. Further Nehari-type results appear in Xiao and Chen~\cite{XiaoChen2024}. Discrete variational approximation for even systems was developed by Kuang, Chen and Guo~\cite{KuangChenGuo2022}, and related results for nonsymmetric potentials were studied by Kuang and Guo~\cite{KuangGuo2024}.

For $T>0$, define
\begin{align}
 E_T&=W^{1,2}(\R/T\Z,\R^n),\\
 \cS_T&=\{u\in E_T:u(-t)=u(t),\ u(t+T/2)=-u(t)\},\label{eq:symmetric-space}\\
 \psi_T(u)&=\int_0^T\left(\frac12|u'(t)|^2-V(u(t))\right)\dd.\label{eq:action}
\end{align}
We define the path class with endpoint of negative functional value and its minimax value:
\begin{equation}\label{eq:minimax}
 \Gamma_T=\{\gamma\in C([0,1],\cS_T):\gamma(0)=0,\
                    \psi_T(\gamma(1))<0\},\quad
 c_T=\inf_{\gamma\in\Gamma_T}\max_{s\in[0,1]}\psi_T(\gamma(s)).
\end{equation}
Here continuity of paths is always understood in the $W^{1,2}$ topology. Whenever $c_T>0$, let $\cC_T$ denote the path component containing zero in
\begin{equation}\label{eq:zero-component}
 \{u\in\cS_T:\psi_T(u)<c_T\}.
\end{equation}

Motivated by these above results, especially the first problem proposed by Long in Section~5 of \cite{Long2021}, we prove the following theorem.

\enlargethispage{\baselineskip}
\begin{theorem}\label{thm:main}
Let $n\ge1$ be an integer and let $V:\R^n\to\R$ satisfy $(\mathrm{V1'})$ and $(\mathrm{V2})$--$(\mathrm{V5})$. Then, for every $T>0$, the system \eqref{eq:system} possesses a nonconstant classical solution
\[
 q\in C^2(\R,\R^n)\cap\cS_T
\]
whose minimal positive period is exactly $T$. More precisely, the solution satisfies
\begin{equation}\label{eq:main-selection}
 \psi_T(q)=c_T>0,\qquad
 D(\psi_T|_{\cS_T})(q)=0,\qquad
 q\in\overline{\cC_T}.
\end{equation}
\end{theorem}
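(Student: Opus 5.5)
The plan follows Long's framework in $\cS_T$ and adds one ingredient: a local-topological selection of the critical point which, together with a Sturm-type Jacobi field argument, excludes the triple iterate.

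\textbf{Step 1: existence and regularity.} Assumptions $(\mathrm{V3})$ and $(\mathrm{V4})$, together with the Wirtinger inequality on $\cS_T$ (whose elements have zero mean), give $\psi_T(u)\ge\alpha>0$ on a small sphere $\|u\|=\rho$. Assumption $(\mathrm{V2})$ gives $\psi_T(s e)\to-\infty$ along any $e\ne0$. Hence $\Gamma_T\neq\emptyset$ and $c_T\ge\alpha>0$, and $(\mathrm{V2})$ yields the Palais--Smale condition on $\cS_T$. The space $\cS_T$ is the fixed-point set of an isometric $\Z_2\times\Z_2$ action leaving $\psi_T$ invariant, because $V$ is even. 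By Palais' symmetric criticality, critical points of $\psi_T|_{\cS_T}$ are critical points on $E_T$, and are therefore $C^2$ solutions of \eqref{eq:system}.

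\textbf{Step 2: selection.} I will use Hofer's refinement of the mountain pass theorem. Since $\cC_T$ is a neighbourhood of $0$ in the open set $\{\psi_T<c_T\}$ and every path in $\Gamma_T$ leaves it, $\partial\cC_T$ contains a critical point $q$ at level $c_T$. Moreover, $q$ is either a local minimum of $\psi_T|_{\cS_T}$ or of mountain pass type. Under $(\mathrm{V1'})$ this gives the Morse-index bound $m^-(q)\le 1\le m^-(q)+m^0(q)$ for the Hessian $\psi_T''(q)$ restricted to $\cS_T$, and it gives \eqref{eq:main-selection}.

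\textbf{Step 3: reduction to $T$ versus $T/3$.} This is Long's argument. If $q$ has minimal period $T/k$, the antiperiodicity $q(t+T/2)=-q(t)$ with $q\neq0$ forces $k$ to be odd. Using $q(t)=w(kt)$, one builds $(k-1)/2$ independent negative directions, each pairing a Jacobi field of $w$ with the symmetry. Together with $m^-(q)\le1$ this leaves only $k\in\{1,3\}$.

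\textbf{Step 4: excluding $k=3$, the main obstacle.} Elements of $\cS_T$ are determined by their restriction to $[0,T/4]$. On this interval the quadratic form $\psi_T''(q)$ is a Sturm--Liouville form with Neumann condition at $0$ and Dirichlet condition at $T/4$.

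Now suppose $q(t)=w(3t)$. Then $q'$ solves the Jacobi equation and vanishes at $0$ and at $T/6$, since $w$ is even and $T/3$-half-antiperiodic. The interior zero $T/6\in(0,T/4)$ makes the truncated field $\phi=q'\,\mathbf 1_{[0,T/6]}$, extended to $[0,T/2]$ by the symmetries, an admissible element of $\cS_T$. It satisfies $\psi_T''(q)[\phi,\phi]=0$. However, $\phi$ is not a Jacobi field of the mixed problem on $[0,T/4]$, because $\phi'$ jumps at $T/6$.

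The standard Sturm comparison argument then perturbs $\phi$ near its kink to produce a strictly negative direction. Combined with the negative direction already found in Step 3, this should give $m^-(q)\ge2$, contradicting Step 2.

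I expect the delicate point to be verifying that the two negative directions are genuinely independent in $\cS_T$. If the count only reaches $m^-(q)=1$ with $m^0(q)\ge1$, that is, in the degenerate case, I would use $q\in\overline{\cC_T}$ directly. Along the kernel direction combined with the Step 3 direction, I would try to construct a path from $0$ to the negative set whose maximum is strictly below $c_T$, contradicting the definition of $c_T$. The descent near $q$ would come from the sign of the higher-order terms, controlled through the iterate structure $q=w(3\cdot)$ and the rescaling relating $\psi_T(w(3\cdot))$ to $3\psi_{T/3}(w)$.

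Having excluded $k=3$, the minimal period of $q$ is exactly $T$.
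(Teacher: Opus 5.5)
\textbf{The gap is in Step~4, which is where the triple iterate has to be excluded.}

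For $k=3$ one has $m=1$ and $a=T/12$, and $T/6=2a$ is the only interior turning node in $[0,T/4]$. Your truncated field $\phi=q'\mathbf 1_{[0,T/6]}$ is therefore exactly the cutoff Jacobi field $h=h_1$. Its kink-perturbation $h-\epsilon w$ is the single negative direction already counted in Step~3, since $(k-1)/2=1$. Step~4 rediscovers the same direction, so there is no independent second one.

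No local index count can supply a second one. Take $n=1$. The Morse index of the Neumann--Dirichlet form on $[0,3a]$ equals the number of zeros in $(0,3a)$ of $u_0=\partial_A q(\cdot;A)$, where $q(\cdot;A)$ is the solution with $q(0)=A$, $q'(0)=0$. Differentiating the identities $q(a(A);A)=0$, $q(2a(A);A)=-A$ and $q(3a(A);A)=0$ gives
$u_0(a)=-q'(a)a'(A)$, $u_0(2a)=-1$ and $u_0(3a)=-3q'(3a)a'(A)$.

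Suppose the quarter period increases with the amplitude at that orbit. Hypothesis $(\mathrm{V2})$, imposed only outside a ball, does not forbid this. Then Sturm separation against $q'$ puts the zeros of $u_0$ in $(a,2a)$ and $(3a,4a)$, and $u_0(3a)\neq0$. So $q$ is nondegenerate with $m^-=1$ and $m^0=0$.

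Your fallback only addresses $m^0\ge1$, and only heuristically. A nondegenerate index-one point is automatically of mountain-pass type, so Steps~2--3 give no contradiction in this case, and $k=3$ is not excluded.

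\textbf{The missing idea is global rather than local.} You must show that both local components of $\{\psi_T<c_T\}$ near $q$ connect, inside $\{\psi_T<c_T\}$, to points of negative action. Then $q\in\overline{\cC_T}$ puts a point of $\cC_T$ in one of them, and this yields a path in $\Gamma_T$ with maximum below $c_T$, a contradiction.

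The paper builds these connections with explicit orbit surgeries on $[0,L]$:
\begin{enumerate}
\item $U_s$ deletes the first $s$ units of motion and inserts a wait at the turning point $B_0$. It satisfies $I_L(U_s)-I_L(q)=-\int_0^s|q'|^2$, has tangent $h$, and ends with $I_L(U_b)<0$; this last point is where $V\ge0$ is used.
\item For $k=3$, $W_s$ waits at $A_0$ and excises $[2a-s,2a+s]$. Its action loss is $\int_{2a-s}^{2a+s}|q'|^2$, its tangent is $-2h$, and it ends with $I_L(W_a)<0$.
\end{enumerate}
The vector $h$ is isotropic but $B(h,w)\ne0$ for some $w$. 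Hence, in the chart of Lemma~\ref{lem:local-exits} built on the negative direction $v=h-\epsilon w$, the tangents $h$ and $-2h$ land on opposite sides of the ridge $\xi=0$, so both exit sides reach negative action.

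This route also makes your Step~2 unnecessary. That step relies on Hofer's mountain-pass characterization for the free-endpoint class, combined with a splitting lemma to get $m^-\le1$. The paper needs only $q\in\overline{\cC_T}$. It handles $k\ge5$ with a negative two-plane and the single path $U_s$, again with no Morse-index bound.
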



Our proof uses global information carried by a mountain pass selection within the variational framework of Ambrosetti and Rabinowitz~\cite{AmbrosettiRabinowitz1973}. The local geometry and location of mountain pass critical points have been studied by Hofer~\cite{Hofer1985} and Ghoussoub and Preiss~\cite{GhoussoubPreiss1989}; component-based selection also occurs in the study of partially convex reversible systems by Zhou~\cite{Zhou2025}. We give a self-contained version for a mountain pass class whose terminal point may be any point of negative action. This version selects a critical point in the closure of the component containing zero in the strict sublevel below the mountain pass value.

\subsection*{Sketch of proof of Theorem 1.1}
Fix $T>0$. The symmetric space $\cS_T$ excludes nonzero constant loops.
The assumptions at the origin and at infinity give mountain pass geometry
and the Palais--Smale condition for $\psi_T|_{\cS_T}$
(cf. Lemmas~\ref{lem:geometry} and~\ref{lem:ps}). A deformation argument for the
path class \eqref{eq:minimax}, whose negative action endpoint is free,
selects a critical point $q$ with
\[
 \psi_T(q)=c_T>0,\qquad q\in\overline{\cC_T}
\]
(cf. Corollary~\ref{cor:selected-solution}). The component $\cC_T$ contains no
point of negative action: a path from zero to such a point inside
$\{\psi_T<c_T\}$ would contradict the definition of $c_T$.

Let $\tau$ be the minimal period of $q$. Its reflection symmetry and
half-antiperiodicity imply $T=k\tau$ with $k$ odd
(cf. Lemma~\ref{lem:periods}). If $\tau<T$, write $k=2m+1\ge3$ and restrict
the orbit to $[0,L]$, where $L=T/4$. The reflection isomorphism gives
$\psi_T=4I_L$. Cutoff velocity fields yield an isotropic vector $h$ outside
the radical of $D^2I_L(q)$ and a negative definite subspace of dimension
$m$ (cf. Lemma~\ref{lem:cutoff}).

On this quarter-period interval, we construct paths by deleting part of
the motion and inserting a waiting segment at a turning point. Their
action losses are explicit integrals of $-|q'|^2$, and their terminal
actions are negative. For $k=3$, two such paths have opposite first-order
tangents $h$ and $-2h$. A local coordinate in which the functional has the
form $I_L(q)+g(y)-\xi^2$ connects each nearby point of smaller action to
one of these paths. For $k\ge5$, the negative subspace contains a
negative two-plane, which connects the two local exit directions, so one
surgery path suffices. Therefore, Lemma~\ref{lem:local-exits} gives the
strict sublevel exit property in Proposition~\ref{prop:exit}.

Since $q\in\overline{\cC_T}$, this exit property applies to a point of
$\cC_T$ sufficiently close to $q$. Joining zero to that point within
$\cC_T$, and then following its exit path to negative action, produces an
admissible path whose compact image has maximum strictly below $c_T$.
This contradiction excludes every odd iterate $k\ge3$ and proves
$\tau=T$.

Section~\ref{sec:preliminaries} establishes the variational framework,
compactness, boundary selection, local sublevel geometry, and the required
negative directions. Section~\ref{sec:mainproof} proves
Theorem~\ref{thm:main} using the orbit-surgery exclusion. The conclusion
concerns an appropriately selected symmetric solution. It does not assert
that every critical point obtained from every fixed negative endpoint has
minimal period $T$.

\section{Some preliminaries}\label{sec:preliminaries}

\subsection{The variational framework}

We use the notation $H^1=W^{1,2}$. More explicitly, $E_T$ consists of
functions $u\in L^2(0,T;\R^n)$ whose weak derivative $u'$ belongs to
$L^2(0,T;\R^n)$ and satisfy $u(0)=u(T)$. The weak
derivative is characterized by
\[
 \int_0^T\langle u,\phi'\rangle\dd
 =-\int_0^T\langle u',\phi\rangle\dd,
 \qquad\forall\phi\in C_c^\infty((0,T),\R^n).
\]
The inner product
\[
 (u,v)_{H^1}=\int_0^T
       \bigl(\langle u,v\rangle+\langle u',v'\rangle\bigr)\dd
\]
makes $E_T$ to be a Hilbert space. This $H^1$-norm satisfies
$\|u\|_{H^1}^2=\|u\|_2^2+\|u'\|_2^2$.

On a bounded interval, the fundamental theorem for Sobolev functions
states that $w\in W^{1,p}(a,b)$, $1\le p\le\infty$, has an absolutely
continuous representative such that
\[
 w(t)-w(s)=\int_s^t w'(r)\,\mathrm dr,
 \qquad\forall s,t\in[a,b];
\]
see \cite[Theorem~8.2]{Brezis2011}. In fact, the
primitive of the weak derivative has that derivative, and its difference
from $w$ has zero distributional derivative and is constant. We always
use this representative and extend every loop periodically to $\R$.
The assumptions also imply that $V(0)=0$ and $\nabla V(0)=0$.

For $c_T>0$, put $A_T=\{u\in\cS_T:\psi_T(u)<c_T\}$. Then
\[
 \cC_T=\{u\in A_T:\exists\eta\in C([0,1],A_T),\
                   \eta(0)=0,\ \eta(1)=u\}.
\]
Thus $q\in\overline{\cC_T}$ means that every $H^1$-ball about $q$ in
$\cS_T$ contains a point of $\cC_T$.

\begin{lemma}\label{lem:geometry}
Assume that $V$ satisfies $(\mathrm{V1'})$ and
$(\mathrm{V2})$--$(\mathrm{V5})$. Then $\cS_T$ is a closed Hilbert
subspace of $E_T$, and
\[
 \norm{u}_T:=\norm{u'}_{L^2(0,T)}
\]
is a Hilbert norm equivalent to the $H^1$-norm on $\cS_T$. The
restriction $\psi_T|_{\cS_T}$ is of class $C^2$, the class $\Gamma_T$
is nonempty, and $0<c_T<\infty$.

If $L=T/4$, the restriction to $[0,L]$ is a bounded linear isomorphism
\begin{equation}\label{eq:quarter-space}
 \cS_T\simeq X_L:=\{u\in H^1([0,L],\R^n):u(L)=0\}.
\end{equation}

Let $R_Lu=u|_{[0,L]}$, we have
\begin{equation}\label{eq:quarter-action}
 \psi_T(u)=4I_L(R_Lu),\qquad
 I_L(w)=\int_0^L\left(\frac12|w'(t)|^2-V(w(t))\right)\dd.
\end{equation}
Here $w\in X_L$. We identify a symmetric loop with its restriction
through \eqref{eq:quarter-space}; thus $\psi_T(u)=4I_L(u)$ will also
be used when the domain of the argument is clear.

\end{lemma}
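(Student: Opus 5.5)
I will verify the assertions of Lemma~\ref{lem:geometry} in the order stated: first the linear structure (closedness, norm equivalence, quarter-period isomorphism), then regularity, and last the mountain pass geometry giving $\Gamma_T\neq\emptyset$ and $0<c_T<\infty$.

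\emph{Linear part.} The maps $u\mapsto u(-\cdot)$ and $u\mapsto -u(\cdot+T/2)$ are isometries of $E_T$, so $\cS_T$, the joint fixed set, is a closed subspace. For $u\in\cS_T$ we have $u(t+T/2)=-u(t)$, hence $\int_0^T u\,dt=0$. Wirtinger's inequality then gives $\|u\|_2\le (T/2\pi)\|u'\|_2$, which yields the norm equivalence. For the quarter-period isomorphism, I combine evenness with antiperiodicity to get $u(T/2-t)=-u(t-T/2)=-u(T/2+t)\cdot(-1)\cdots$. More precisely, $u(T/2-t)=-u(-t)=-u(t)$, so $u(T/4)=-u(T/4)$ and therefore $u(L)=0$; thus $R_L$ maps $\cS_T$ into $X_L$. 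Conversely, given $w\in X_L$, I extend it by $w(2L-t)=-w(t)$ on $[L,2L]$, which is continuous because $w(L)=0$. I then extend evenly to $[-2L,2L]$ and periodically with period $4L=T$. The result lies in $H^1$, since gluing absolutely continuous pieces continuously preserves $H^1$, and it is easily checked to be even and $T/2$-antiperiodic. Both maps are bounded, and they are mutually inverse. Since each of the four quarters is obtained from $[0,L]$ by $t\mapsto\pm t+c$ combined with $u\mapsto\pm u$, and $|u'|^2$ and $V(u)$ (by (V5)) are invariant under these operations, each quarter contributes $I_L(R_Lu)$. This gives $\psi_T=4I_L\circ R_L$.

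\emph{Regularity.} Since $H^1\hookrightarrow C^0$ on a compact interval and $V\in C^2$, the standard argument, as in Mawhin--Willem or Rabinowitz, shows that $u\mapsto\int_0^T V(u)\,dt$ is $C^2$ on $E_T$. Its derivatives are $\int\langle\nabla V(u),h\rangle$ and $\int\langle D^2V(u)h,k\rangle$. Continuity of these derivatives follows from uniform convergence in $C^0$ and uniform continuity of $D^2V$ on bounded sets. The kinetic term is a continuous quadratic form. Restricting to the closed subspace $\cS_T$ preserves $C^2$.

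\emph{Geometry.} By (V3), for every $\varepsilon>0$ there is $\delta$ with $V(x)\le\varepsilon|x|^2$ for $|x|\le\delta$. On $\cS_T$ the sup norm is controlled by $C\|u\|_T$. Hence for $\|u\|_T=\rho$ small,
\[
 \psi_T(u)\ge \tfrac12\rho^2-\varepsilon\|u\|_2^2\ge \left(\tfrac12-\varepsilon T^2/4\pi^2\right)\rho^2\ge \rho^2/4 .
\]
Every path in $\Gamma_T$ starts at $0$ and ends where $\psi_T<0$, so it must cross the sphere $\|u\|_T=\rho$, since $\psi_T\ge0$ inside the ball by (V4) and the same estimate. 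This gives $c_T\ge\rho^2/4>0$. For nonemptiness, integrating (V2) along rays gives $V(x)\ge a|x|^\mu-b$ for all $x$, using (V4) for the bound $-b$. I take a fixed $e\in\cS_T\setminus\{0\}$, for instance $e(t)=\cos(2\pi t/T)\,v$ with a unit vector $v$. Then $\psi_T(se)\le \tfrac{s^2}{2}\|e\|_T^2-as^\mu\int|e|^\mu+bT\to-\infty$, because $\mu>2$ and $\int|e|^\mu>0$. So $\gamma(s)=s\,s_0e$ lies in $\Gamma_T$ for large $s_0$, and $c_T\le\max\psi_T(\gamma)<\infty$.

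The only point needing real care is the $H^1$ gluing in the isomorphism, namely checking that the reflected extension has no jump. This is exactly where $u(L)=0$ is used, so I expect no genuine obstacle.
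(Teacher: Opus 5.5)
Your proposal is correct and follows essentially the paper's route: isometry fixed-set closedness, the reflected quarter-period extension, the standard $C^2$ argument, the small-sphere bound from $(\mathrm{V3})$, and ray integration of $(\mathrm{V2})$. The one variation is the norm equivalence, which you get from Wirtinger's inequality via the zero mean forced by antiperiodicity, whereas the paper integrates from the zero $u(T/4)=0$ (which you also derive) and so obtains at the same time the sup bound $\|u\|_\infty\le\sqrt T\|u'\|_2$ that your $(\mathrm{V3})$ step relies on. A minor slip: $\psi_T\ge0$ on the ball $\|u\|_T\le\rho$ comes from the $(\mathrm{V3})$ estimate alone, not from $(\mathrm{V4})$, which only bounds $\psi_T$ from above.
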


\begin{proof}
On $E_T$, define the reflection and half-period translation by
\[
 (\mathcal Ru)(t)=u(-t),\qquad
 (\mathcal Au)(t)=u(t+T/2).
\]
These two maps are linear isometries for the $H^1$-norm. Hence
$\cS_T=\ker(\mathcal R-\mathrm{Id})\cap
\ker(\mathcal A+\mathrm{Id})$ is closed. For $u\in\cS_T$, its two
symmetries give
\[
 u(T/4)=u(-T/4)=-u(T/4),
\]
so $u(T/4)=0$. Then the fundamental theorem gives
\[
 u(t)=\int_{T/4}^t u'(r)\,\mathrm dr,\qquad\forall 0\le t\le T.
\]
Let $I_t$ be the interval with endpoints $T/4$ and $t$, the
Cauchy--Schwarz inequality yields
\[
 |u(t)|\le |t-T/4|^{1/2}
             \left(\int_{I_t}|u'(r)|^2\,\mathrm dr\right)^{1/2}
          \le\sqrt T\,\|u'\|_{L^2(0,T)}.
\]
Taking the supremum and then integrating its square proves
\begin{equation}\label{eq:sobolev-estimates}
 \|u\|_\infty\le\sqrt T\,\|u'\|_2,\qquad
 \|u\|_2^2\le T\|u\|_\infty^2\le T^2\|u'\|_2^2.
\end{equation}
In particular,
\[
 \|u'\|_2\le\|u\|_{H^1}\le\sqrt{1+T^2}\,\|u'\|_2.
\]
Thus this proves the norm equivalence.

For $w\in X_L$, we define its extension on $[0,4L]$ by
\[
 (\mathcal E_Lw)(t)=
 \begin{cases}
  w(t),&0\le t\le L,\\
  -w(2L-t),&L\le t\le2L,\\
  -w(t-2L),&2L\le t\le3L,\\
  w(4L-t),&3L\le t\le4L.
 \end{cases}
\]
The adjacent values match because $w(L)=0$; the values at $0$ and $4L$
also match. The extension is absolutely continuous on each piece and,
by matching, on the whole period. Its piecewise derivative belongs to
$L^2$. So It belongs to $H^1$. The formulas imply
$\mathcal E_Lw(-t)=\mathcal E_Lw(t)$ and
$\mathcal E_Lw(t+2L)=-\mathcal E_Lw(t)$ after periodic continuation.
Conversely these identities uniquely determine every quarter from the first.
Furthermore,
\[
 \|\mathcal E_Lw\|_{H^1(0,4L)}^2=4\|w\|_{H^1(0,L)}^2.
\]
Thus the extension and restriction are bounded inverse linear maps.

On each quarter, change variables to $r\in[0,L]$. Reflection or a sign
change leaves $|w'(r)|^2$ invariant, and $(\mathrm{V5})$ gives
$V(-w(r))=V(w(r))$. Hence, for $j=0,1,2,3$,
\[
 \int_{jL}^{(j+1)L}
 \left(\frac12|(\mathcal E_Lw)'(t)|^2-V(\mathcal E_Lw(t))\right)\dd
 =\int_0^L\left(\frac12|w'(r)|^2-V(w(r))\right)\,\mathrm dr.
\]
Summing the four equal integrals gets
$\psi_T(\mathcal E_Lw)=4I_L(w)$, as claimed in \eqref{eq:quarter-action}.

The continuous embedding $H^1([0,T])\hookrightarrow C^0([0,T])$ places the ranges of all loops in a bounded $H^1$ neighborhood in one compact ball of $\R^n$. Taylor's formula, together with uniform continuity of $D^2V$ on that ball, gives continuous Fr\'echet derivatives
\begin{align}
 D\psi_T(u)[v]
 &=\int_0^T\bigl(\langle u',v'\rangle-\langle\nabla V(u),v\rangle\bigr)\dd,
 \label{eq:first-variation}\\
 D^2\psi_T(u)[v,w]
 &=\int_0^T\bigl(\langle v',w'\rangle
                -\langle D^2V(u)v,w\rangle\bigr)\dd.
 \label{eq:second-variation}
\end{align}

It follows from $(\mathrm{V2})$ that the potential is positive on the sphere $|x|=r_0$.
Hence $b_0=\min_{|x|=r_0}V(x)>0$. For a unit vector $e$ and $r\ge r_0$,
\[
 \frac{\mathrm d}{\mathrm dr}\bigl(r^{-\mu}V(re)\bigr)
 =\frac{\langle\nabla V(re),re\rangle-\mu V(re)}{r^{\mu+1}}\ge0.
\]
Integrating from $r_0$ to $r$ yields
$V(re)\ge r^\mu r_0^{-\mu}V(r_0e)\ge b r^\mu$, where
$b=b_0r_0^{-\mu}$. On $|x|\le r_0$, the nonnegativity gives
$V(x)\ge0\ge b|x|^\mu-b_0$. Thus,
\begin{equation}\label{eq:lower-growth}
 V(x)\ge b|x|^\mu-b_0,\qquad\forall x\in\R^n.
\end{equation}

Choose $\varepsilon=1/(4T^2)$. By $(\mathrm{V3})$, there is $\delta>0$
such that $V(x)\le\varepsilon|x|^2$ for $|x|\le\delta$. Choose
$\rho>0$ with $\sqrt T\rho\le\delta$. If $\|u\|_T\le\rho$,
\eqref{eq:sobolev-estimates} gives $\|u\|_\infty\le\delta$, and hence
\begin{equation}\label{eq:small-ball}
 \begin{aligned}
 \psi_T(u)\ge\tfrac12\|u'\|_2^2-\varepsilon\|u\|_2^2
 \ge\left(\tfrac12-\varepsilon T^2\right)\|u\|_T^2
   =\tfrac14\|u\|_T^2.
 \end{aligned}
\end{equation}

For every $\gamma\in\Gamma_T$, its negative action endpoint satisfies
$\|\gamma(1)\|_T>\rho$, while $\|\gamma(0)\|_T=0$. Then the continuity and the intermediate value theorem give some $s_*\in(0,1)$
with $\|\gamma(s_*)\|_T=\rho$. So it yields $\max_s\psi_T(\gamma(s))\ge\rho^2/4$.

For a fixed nonzero $u\in\cS_T$, set
$A=\int_0^T|u(t)|^\mu\dd>0$. The estimate \eqref{eq:lower-growth} gives
\[
 \psi_T(ru)\le\tfrac12r^2\|u\|_T^2-br^\mu A+b_0T\equiv\eta(r)
 \longrightarrow-\infty\qquad(r\to\infty),
\]
due to $\lim_{r\to\infty}\eta(r)r^{-\mu}=-bA<0$. A nonzero vector times $\cos(2\pi t/T)$ supplies such a loop.
Choose $R>0$ with $\psi_T(Ru)<0$ and set $\gamma(s)=sRu$.
Then $\gamma\in\Gamma_T$, and $\psi_T\circ\gamma$ has a finite maximum
on $[0,1]$. Together with \eqref{eq:small-ball}, this proves
$\Gamma_T\ne\emptyset$ and $\rho^2/4\le c_T<\infty$.
\end{proof}

\subsection{Compactness, critical points and primitive periods}

\begin{lemma}\label{lem:ps}
Assume that $V$ satisfies ($\mathrm{V1'}$) and ($\mathrm{V2}$)-($\mathrm{V5}$). Then the functional $\psi_T|_{\cS_T}$ satisfies the Palais--Smale condition, that is, every sequence $(u_j)$ for which $(\psi_T(u_j))$ is bounded and $\norm{D(\psi_T|_{\cS_T})(u_j)}\to0$ has a strongly convergent subsequence in $H^1$. Every positive action critical point is a nonconstant classical solution of \eqref{eq:system}, and belongs to $C^3(\R,\R^n)$.
\end{lemma}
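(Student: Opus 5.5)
We prove the Palais--Smale condition in the standard Ambrosetti--Rabinowitz way, working in the norm $\norm{\cdot}_T$ of Lemma~\ref{lem:geometry}. Let $(u_j)\subset\cS_T$ satisfy $|\psi_T(u_j)|\le M$ and $\epsilon_j:=\norm{D(\psi_T|_{\cS_T})(u_j)}\to0$.

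\textbf{Step 1: boundedness.} Evaluate $D\psi_T(u_j)[u_j]$, which is admissible because $u_j\in\cS_T$, and form
\[
 \mu\psi_T(u_j)-D\psi_T(u_j)[u_j]
 =\Bigl(\tfrac\mu2-1\Bigr)\norm{u_j}_T^2
 +\int_0^T\bigl(\langle\nabla V(u_j),u_j\rangle-\mu V(u_j)\bigr)\dd .
\]
By (V2) the integrand is nonnegative where $|u_j|\ge r_0$. Where $|u_j|<r_0$ it is bounded below by $-C_0$, with $C_0$ depending only on $\sup_{|x|\le r_0}(|\nabla V||x|+\mu V)$. Hence
\[
 \Bigl(\tfrac\mu2-1\Bigr)\norm{u_j}_T^2\le \mu M+C_0T+\epsilon_j\norm{u_j}_T .
\]
Since $\mu>2$, the sequence $(u_j)$ is bounded.

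\textbf{Step 2: convergence.} Extract a subsequence converging weakly in $\cS_T$. By the compact embedding $H^1\hookrightarrow C^0$, it converges uniformly to some $u$. Write
\[
 \norm{u_j-u}_T^2=\bigl(D\psi_T(u_j)-D\psi_T(u)\bigr)[u_j-u]+\int_0^T\langle\nabla V(u_j)-\nabla V(u),u_j-u\rangle\dd .
\]
The first term tends to $0$ because $\epsilon_j\to0$, $u_j-u$ is bounded, and $D\psi_T(u)[u_j-u]\to0$ by weak convergence. The second term tends to $0$ by uniform convergence and the continuity of $\nabla V$ on a compact ball. Thus $u_j\to u$ strongly.

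\textbf{Step 3: regularity of critical points.} This is the only step needing care. A critical point $u$ of $\psi_T|_{\cS_T}$ is a priori critical only against symmetric test functions. We apply the principle of symmetric criticality. The group generated by $\mathcal R$ and $\mathcal A$ acts isometrically on $E_T$, and $\psi_T$ is invariant under it: $\mathcal R$ is invariant by time reversal, and $\mathcal A$ by time translation together with (V5). Consequently the gradient $\nabla\psi_T(u)\in E_T$ is fixed by the group, hence lies in $\cS_T$. It is orthogonal to $\cS_T$ because $u$ is critical on $\cS_T$, so it vanishes.

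Therefore $u$ is critical for $\psi_T$ on all of $E_T$. Taking test functions in $C_c^\infty$ gives $u''=-\nabla V(u)$ weakly. Since $\nabla V(u)$ is continuous, $u\in C^2$. Bootstrapping with $\nabla V\in C^1$, so that $\nabla V(u)\in C^1$, gives $u\in C^3(\R,\R^n)$, and $u$ is a $T$-periodic classical solution.

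\textbf{Step 4: nonconstancy.} The only constant in $\cS_T$ is $0$, since $u(t+T/2)=-u(t)$ forces a constant to vanish. As $\psi_T(0)=0$, a critical point with positive action is nonconstant.
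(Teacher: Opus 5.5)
Your proof is correct. Steps 1, 2 and 4 follow the paper. The paper bounds $F(x)=\langle\nabla V(x),x\rangle-\mu V(x)$ below by $-M$, then shows $\int_0^T\langle u_j',u_j'-u'\rangle\dd\to0$ and subtracts the weakly vanishing term. Your identity with $D\psi_T(u_j)-D\psi_T(u)$ is the same computation. Your explicit nonconstancy step supplies a point the paper leaves implicit.

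The real difference is Step 3, the regularity of critical points.

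\textbf{The paper's route.} It never leaves the symmetric class. It passes to the quarter interval through $\cS_T\simeq X_L$ and tests $DI_L(q)$ against $C_c^\infty((0,L),\R^n)$ to get the equation on $(0,L)$. The free endpoint and the variation $(1-t/L)\zeta$ give the natural boundary condition $q'(0)=0$; the condition $q(L)=0$ is built into $X_L$. It then glues by even reflection at $0$ and odd reflection at $L$. It checks that values and first and second derivatives match, the last via $\ddot q(L)=-\nabla V(0)=0$, and that oddness of $\nabla V$ preserves the equation.

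\textbf{Your route.} You use Palais' principle of symmetric criticality on $E_T$. This gives the Euler--Lagrange equation on the whole circle at once and removes all the boundary and gluing bookkeeping. It is shorter and carries over unchanged to other symmetry classes.

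\textbf{Trade-off.} The paper's argument is entirely elementary. It also produces the mixed boundary conditions $q'(0)=0$, $q(L)=0$ that the orbit surgery later relies on, although you could read these off the symmetries of a $C^1$ loop directly.

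\textbf{One correction.} Since $\cS_T=\ker(\mathcal R-\mathrm{Id})\cap\ker(\mathcal A+\mathrm{Id})$, the group whose fixed set is $\cS_T$ is generated by $\mathcal R$ and $-\mathcal A\colon u\mapsto -u(\cdot+T/2)$, not by $\mathcal R$ and $\mathcal A$. The fixed set of the group generated by $\mathcal R$ and $\mathcal A$ meets $\cS_T$ only in $0$, so ``fixed by the group, hence in $\cS_T$'' fails as written. Your remark that invariance needs (V5) shows you meant $-\mathcal A$: invariance under $\mathcal A$ alone needs only translation invariance. With this fix, $\{\mathrm{Id},\mathcal R,-\mathcal A,-\mathcal R\mathcal A\}$ is a group of linear $H^1$-isometries leaving $\psi_T$ invariant, and your argument goes through.
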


\begin{proof}
Set $F(x)=\langle\nabla V(x),x\rangle-\mu V(x)$ and
\[
 M=\max\{0,-\min_{|x|\le r_0}F(x)\}.
\]
The minimum exists by the continuity of $F$. It follows from $(\mathrm{V2})$ that
$F\ge0$ outside the ball, so $F\ge-M$ on $\R^n$.
By \eqref{eq:first-variation} with $v=u$, we obtain
\begin{equation}\label{eq:ps-bound}
 \begin{aligned}
 \mu\psi_T(u)-D\psi_T(u)[u]
 &=\left(\tfrac\mu2-1\right)\|u'\|_2^2
                  +\int_0^T F(u(t))\dd\\
 &\ge\left(\tfrac\mu2-1\right)\|u\|_T^2-MT,
       \qquad\forall u\in\cS_T.
 \end{aligned}
\end{equation}
For a P.S. sequence, choose $C\ge0$ with
$|\psi_T(u_j)|\le C$ and put
\[
 \epsilon_j=\|D(\psi_T|_{\cS_T})(u_j)\|_{(\cS_T)^*}\longrightarrow0,
\]
where the dual norm corresponds to $\|\cdot\|_T$. Let $A=\mu/2-1>0$, then
\[
 A\|u_j\|_T^2\le\mu C+MT+\epsilon_j\|u_j\|_T.
\]
If $\|u_j\|_T\to\infty$, then, after being divided by $\|u_j\|_T^2$,
the above inequality gives $A\le0$. Thus $(u_j)$ is bounded, and the norm
equivalence gives boundedness in $H^1$.

The estimates \eqref{eq:sobolev-estimates} and
\[
 |u_j(t)-u_j(s)|\le\norm{u_j'}_2|t-s|^{1/2}
\]
give the uniform boundedness and equicontinuity on a period interval. Passing to a subsequence, $u_j\rightharpoonup u$ in $H^1$ and $u_j\to u$ uniformly, which implies that $u\in\cS_T$.

Now we have
\[
 \int_0^T\langle u_j',u_j'-u'\rangle\dd
 =D\psi_T(u_j)(u_j-u)
  +\int_0^T\langle\nabla V(u_j),u_j-u\rangle\dd\longrightarrow0,
\]
where the first term tends to zero by the boundedness and the P.S. hypothesis; the second one is due to the uniform convergence and boundedness of $\nabla V$. Subtracting the term that tends to zero by weak convergence yields
\[
 \norm{u_j'-u'}_2^2\longrightarrow0.
\]
Then the norm equivalence proves the strong convergence in $H^1$.

Let $q$ be a critical point and $L=T/4$. The restriction and \eqref{eq:quarter-action} imply
$DI_L(q)[v]=0$, $\forall v\in X_L$. For a test function $\phi\in C_c^\infty((0,L),\R^n)$, extending it by zero to the
endpoints gives $\phi\in X_L$, and $q+\epsilon\phi\in X_L$ for every
$\epsilon\in\R$. Thus
\[
 0=\left.\frac{\mathrm d}{\mathrm d\epsilon}
                       I_L(q+\epsilon\phi)\right|_{\epsilon=0}
   =\int_0^L\bigl(\langle q',\phi'\rangle
                         -\langle\nabla V(q),\phi\rangle\bigr)\dd.
\]
This yields $\ddot{q}=-\nabla V(q)$ on $(0,L)$. Since $q$ and $\nabla V(q)$ are continuous, this proves
$q\in C^2([0,L],\R^n)$.

For every $v\in X_L$, noting that $v(L)=0$, by the integration by parts, we have
\[
 \begin{aligned}
 0=DI_L(q)[v]
 =[\langle q'(t),v(t)\rangle]_0^L
       -\int_0^L\langle\ddot{q}+\nabla V(q),v\rangle\dd=-\langle q'(0),v(0)\rangle.
 \end{aligned}
\]
For any $\zeta\in\R^n$, the variation
$v(t)=(1-t/L)\zeta\in X_L$ and $v(0)=\zeta$. Taking
$\zeta=q'(0)$ gives $q'(0)=0$. So we have
\begin{equation}\label{eq:mixed-boundary}
 q'(0)=0,\qquad q(L)=0.
\end{equation}

Even reflection at $0$ matches the first derivative because $q'(0)=0$.
At $L$, odd reflection matches the value because $q(L)=0$, matches the first
derivative automatically, and matches the second derivative because
$\ddot{q}(L)=-\nabla V(0)=0$. Since $\nabla V$ is odd, both reflections preserve the same equation.
They give a globally $C^2$ solution.
As $V\in C^2$, the composition $\nabla V(q(t))$ is $C^1$, and
\[
 \frac{\mathrm d}{\mathrm dt}\ddot{q}(t)
       =-D^2V(q(t))q'(t)
\]
is continuous. Thus the claimed $C^3$ regularity of $q$ is proved.
\end{proof}

\begin{lemma}\label{lem:periods}
If $q:\R\to\R^n$ is nonconstant, continuous and periodic, its set of periods is $\tau\Z$ for some $\tau>0$. Moreover, if $q\in\cS_T$, then $T=k\tau$ for a positive odd integer $k$, and $q\in\cS_\tau$.

\end{lemma}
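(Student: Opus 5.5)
The plan is to identify the period set first, then use the symmetries to show that $k$ is odd.

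For the first claim, let $P=\{s\in\R: q(t+s)=q(t)\ \forall t\}$. This is an additive subgroup of $\R$. It is closed because $q$ is continuous: if $s_j\to s$ with $s_j\in P$, then $q(t+s)=\lim q(t+s_j)=q(t)$. It contains a nonzero element, since $q$ is periodic. A closed subgroup of $\R$ is either $\{0\}$, all of $\R$, or $\tau\Z$ for some $\tau>0$. If $P=\R$, then $q(t)=q(0)$ for all $t$, so $q$ is constant, which is excluded. Hence $P=\tau\Z$ with $\tau>0$ the minimal period.

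Now let $q\in\cS_T$. Since $T\in P$, we have $T=k\tau$ for some positive integer $k$. Suppose $k$ were even. Then $T/2=(k/2)\tau\in P$, so $q(t+T/2)=q(t)$. Combined with antiperiodicity, $q(t+T/2)=-q(t)$, this gives $q\equiv0$, contradicting nonconstancy. Hence $k$ is odd.

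It remains to check $q\in\cS_\tau$. Write $k=2m+1$. Then
\[
\tau/2=T/2-m\tau,
\]
so
\[
q(t+\tau/2)=q(t+T/2-m\tau)=q(t+T/2)=-q(t),
\]
using periodicity with period $m\tau$. Evenness $q(-t)=q(t)$ is inherited directly. For regularity, $q$ restricted to a period interval of length $\tau$ is in $W^{1,2}$, because it is a restriction of an $H^1_{loc}$ function. Therefore $q\in W^{1,2}(\R/\tau\Z,\R^n)$ with both symmetries, that is, $q\in\cS_\tau$.

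The only mildly delicate step is the closed-subgroup classification. I would either quote it or argue it directly: let $\tau=\inf(P\cap(0,\infty))$. If $\tau=0$, then $P$ is dense, and closedness gives $P=\R$. If $\tau>0$, closedness gives $\tau\in P$, and the division algorithm then gives $P=\tau\Z$.
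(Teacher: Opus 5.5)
Your proposal is correct and follows essentially the same route as the paper. Both arguments show the period group is closed and rule out arbitrarily small periods, then get $\tau\in G$ and $G=\tau\Z$ by the division algorithm. Both exclude even $k$ by combining $T/2$-periodicity with antiperiodicity, and both obtain $q(t+\tau/2)=-q(t)$ from $T/2=m\tau+\tau/2$. Your explicit check that $q\in W^{1,2}(\R/\tau\Z,\R^n)$ is a small detail that the paper leaves implicit.
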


\begin{proof}
The period set
\[
 G=\{h\in\R:q(t+h)=q(t),\ \forall t\in\R\}
\]
is an additive subgroup of $\R$. If $g_j\in G$ and
$g_j\to g$, the continuity gives $q(t+g)=\lim_jq(t+g_j)=q(t)$, $\forall t$. So It is closed.

If the positive period sequence $h_j\to 0$, then for any $s\in\R$, choose
$m_j=\lfloor s/h_j\rfloor$, it holds
\[
 0\le s-m_jh_j<h_j,
\]
so $m_jh_j\to s$. Since each integer multiple of a period is a period,
\[
 q(t+s)=\lim_{j\to\infty}q(t+m_jh_j)=q(t),
 \qquad\forall t,s\in\R,
\]
which makes $q$ to be constant. Therefore
$\tau=\inf(G\cap(0,\infty))>0$.

Choose $g_j\in G$ with $\tau\le g_j<\tau+1/j$, the closedness of $G$ gives
$\tau\in G$. For any $g\in G$, write
\[
 g=p\tau+r,\qquad p=\lfloor g/\tau\rfloor\in\Z,
 \qquad 0\le r<\tau.
\]
The subgroup property implies $r=g-p\tau\in G$. A positive $r$ would contradict
the definition of $\tau$, so $r=0$. Hence $G=\tau\Z$.
For $q\in\cS_T$, one has $T\in G$ and therefore $T=k\tau$ with
$k\in\N$.

If $k$ were even, $T/2$ would be a period. Then half-antiperiodicity give $q(t)=-q(t)$, $\forall t$, contradiction! Thus $k=2j+1$ for an integer $j\ge0$. Since $T/2=j\tau+\tau/2$, we have $q(t+\tau/2)=-q(t)$. Hence $q\in\cS_\tau$.
\end{proof}

\subsection{A boundary-selected mountain pass point}

We next isolate the precise selection property used in the proof. For a subset $A$ of a Hilbert space, $\overline A$ denotes its closure.

\begin{lemma}\label{lem:selection}
Let $H$ be a real Hilbert space, let $f\in C^2(H,\R)$ with $f(0)=0$, and define
\[
 \Gamma=\{\gamma\in C([0,1],H):\gamma(0)=0,\ f(\gamma(1))<0\},
 \qquad c=\inf_{\gamma\in\Gamma}\max_{s\in[0,1]}f(\gamma(s)).
\]
Assume $\Gamma\ne\emptyset$, $0<c<\infty$, and $f$ satisfies the P.S. condition at $c$: every sequence $x_j$ with $f(x_j)\to c$ and $\norm{Df(x_j)}\to0$ has a strongly convergent subsequence. Let $\cC$ be the path component of $\{f<c\}$ containing zero, and set
\[
 \cK_c=\{x\in H:f(x)=c,\ Df(x)=0\}.
\]
Then $\cK_c$ is nonempty and compact, and
\begin{equation}\label{eq:selection}
 \cK_c\cap\overline{\cC}\ne\emptyset.
\end{equation}

\end{lemma}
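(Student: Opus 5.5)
Nonemptiness and compactness of $\cK_c$ come first and are standard. Compactness follows from the P.S. condition at $c$: any sequence in $\cK_c$ is a P.S. sequence, so it has a convergent subsequence, and the limit lies in $\cK_c$ by continuity of $f$ and $Df$. For nonemptiness, suppose $\cK_c=\emptyset$. P.S. then gives $\varepsilon_0>0$ and $\delta>0$ with $\norm{Df(x)}\ge\delta$ whenever $|f(x)-c|\le\varepsilon_0$. Shrink $\varepsilon_0<c/2$ so the deformation fixes $\{f\le 0\}$. The standard deformation lemma, built from a locally Lipschitz pseudo-gradient flow cut off outside $\{|f-c|\le 2\varepsilon\}$, yields $\eta$ with $\eta(\{f\le c+\varepsilon\})\subset\{f\le c-\varepsilon\}$. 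This $\eta$ is the identity on $\{f\le 0\}$, and $f$ does not increase along it. Applying $\eta$ to a path whose maximum is below $c+\varepsilon$ gives a path in $\Gamma$: its endpoints are unchanged because $f(0)=0$ and $f(\gamma(1))<0$. Its maximum is at most $c-\varepsilon$, which is a contradiction.

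The real content is \eqref{eq:selection}. I argue by contradiction, assuming $\cK_c\cap\overline{\cC}=\emptyset$. Since $\cK_c$ is compact and $\overline{\cC}$ is closed, $d:=\dist(\cK_c,\overline{\cC})>0$. Let $N=\{x:\dist(x,\cK_c)<d/2\}$, so $N\cap\cC=\emptyset$ and even $\dist(N,\cC)\ge d/2$. P.S. gives $\varepsilon_0,\delta>0$ with $\norm{Df}\ge\delta$ on $\{|f-c|\le\varepsilon_0\}\setminus N_{d/4}$, where $N_{d/4}$ is the $d/4$-neighbourhood of $\cK_c$. Here I use the quantitative deformation lemma with excision: for small $\varepsilon$ there is a deformation $\eta:[0,1]\times H\to H$ with the following properties.
\begin{enumerate}
\item $\eta(1,\{f\le c+\varepsilon\}\setminus N_{d/2})\subset\{f\le c-\varepsilon\}$.
\item $\eta$ is the identity on $\{f\le c-2\varepsilon\}$.
\item $f$ is nonincreasing along $\eta$.
\item Each point moves a distance at most $d/8$.
\end{enumerate}
The displacement bound comes from choosing $\varepsilon\le \delta d/16$, say, together with a flow of speed at most $1$.

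Now take $\gamma\in\Gamma$ with $\max f\circ\gamma<c+\varepsilon$. Let $s_0=\sup\{s:\gamma([0,s])\subset\{f<c\}\}$. The idea is that the initial portion $\gamma([0,s_0))$ lies in $\cC$, since it is connected, lies in $\{f<c\}$ and contains $0$. Hence $\gamma(s_0)\in\overline{\cC}$, which is at distance $\ge d/2$ from $\cK_c$ and thus outside $N_{d/4}$. The point $\gamma(s_0)$ sits on $\{f=c\}$ and is regular, so pushing down there is fine. The problem is what happens later along $\gamma$, which may pass through $N$. To handle this I modify the path. I deform only up to the first time $s_1$ at which $\eta(1,\gamma(s))$ falls into $\{f<c-\varepsilon\}$ beyond $s_0$, and then argue that the deformed point $\eta(1,\gamma(s_0))$ belongs to $\{f\le c-\varepsilon\}$. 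Its deformation trajectory stays within $d/8$ of $\overline{\cC}$, so it never enters $N_{d/4}$ and the uniform descent $\norm{Df}\ge\delta$ applies throughout.

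The key step, and the main obstacle, is to produce an admissible path with maximum below $c$. I aim for a concatenation of three pieces. The first is the portion of $\gamma$ from $0$ to just before $s_0$, staying in $\cC$. The second is a short arc through the regular point $\gamma(s_0)$, pushed down below $c$ by the local flow. The third must reach negative action inside $\{f<c\}$, and the difficulty is that the rest of $\gamma$ may pass through $N$. The cleanest fix I would try is to iterate over exit times, and more precisely to observe the following. The set $U$ of $s\in[0,1]$ such that $\gamma(s)$ is joinable to $0$ in $\{f<c\}$ after deformation is open and closed in the set $\{s: f(\eta(1,\gamma(s)))<c\}$. The complement consists of parameters with $\gamma(s)\in N_{d/2}\cap\{f\ge c-\varepsilon\}$, and these stay at positive distance from the boundary $\overline{\cC}$. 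Consequently $\eta(1,\gamma)$ never leaves $\overline{\cC}$'s neighbourhood into $N$. So $\eta(1,\gamma([0,1]))$ meets $N$ only if $\gamma$ first crosses $\partial\cC$ outside $N$, and there it is pushed below $c$. Making this connectivity bookkeeping rigorous is the crux. The output should be a path in $\Gamma$ whose maximum is strictly below $c$, contradicting the definition of $c$.
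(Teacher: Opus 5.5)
Your proof that $\cK_c$ is nonempty and compact is correct and matches the paper's Step~1; the paper builds the cut-off pseudo-gradient flow by hand, but it is the same deformation. The gap is in \eqref{eq:selection}: you never reach a contradiction. You anchor the argument at the first exit time $s_0$ of the \emph{undeformed} path $\gamma$. But $\gamma(s_0)\in\overline{\cC}$ is automatically at distance at least $d$ from $\cK_c$, so nothing can be extracted there. You then try to assemble, by concatenation, a path in $\Gamma$ with maximum below $c$. The third piece, which must reach negative action inside $\{f<c\}$ while the rest of $\gamma$ may run through $N$, is exactly what you leave as unresolved ``bookkeeping.'' Your claims that the set $U$ is open and closed, and that $\eta(1,\gamma)$ ``never leaves $\overline{\cC}$'s neighbourhood into $N$,'' are not justified and would not by themselves produce such a path. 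Deforming only up to some time $s_1$ also endangers continuity and admissibility of the resulting curve.

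The missing idea is to take the first hitting time of level $c$ \emph{after} deforming the whole path, rather than aiming for a path below $c$. Your own properties (i)--(iv) already suffice. Put $\widetilde\gamma(s)=\eta(1,\gamma(s))$. By (ii), with $2\varepsilon<c$, both endpoints are fixed, so $\widetilde\gamma\in\Gamma$ and $\max f\circ\widetilde\gamma\ge c$. Let $s^*>0$ be the least $s$ with $f(\widetilde\gamma(s))=c$. Then $\widetilde\gamma([0,s^*))$ is a path-connected subset of $\{f<c\}$ containing $0$, hence lies in $\cC$, so $\widetilde\gamma(s^*)\in\overline{\cC}$. On the other hand, $f(\gamma(s^*))\le c+\varepsilon$ and $f(\widetilde\gamma(s^*))=c>c-\varepsilon$, so (i) forces $\gamma(s^*)\in N_{d/2}$. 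Then (iv) gives $\dist(\widetilde\gamma(s^*),\cK_c)<d/2+d/8<d$, contradicting $\dist(\cK_c,\overline{\cC})=d$. This is precisely the paper's Step~2. There, the claim \eqref{eq:near-critical-set} (every point of the deformed path at level $\ge c$ lies within $3r$ of $\cK_c$) plays the role of your (i) together with (iv), against a separation of $4r$.
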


\begin{proof}
We identify the derivative with the gradient through
\[
 Df(x)[v]=(\nabla f(x),v)_H,\qquad
 \|Df(x)\|_{H^*}=\|\nabla f(x)\|_H.
\]
Thus the P.S. condition is unchanged when using
$\nabla f$. The gradient is $C^1$, since $f$ is $C^2$ and the Riesz
identification is a bounded linear map. Its derivative is bounded on a
sufficiently small ball at each point. So integration along line segments
shows that $\nabla f$ is locally Lipschitz.

\medskip

{\bf Step 1: $ \cK_c\ne\emptyset$ and  $\cK_c$ is compact.}

\medskip

Suppose first that $\cK_c=\emptyset$. The P.S. condition gives
$\eta,\sigma>0$ such that
\begin{equation}\label{eq:empty-gradient}
 \|\nabla f(x)\|\ge\sigma
 \qquad\text{when }|f(x)-c|\le2\eta.
\end{equation}
Otherwise, for every $\eta,\sigma>0$, there are $x_j\in H$
with $|f(x_j)-c|\le1/j$ and $\|\nabla f(x_j)\|<1/j$.
A strongly convergent P.S. subsequence would have a limit $x_*$
with $f(x_*)=c$ and $\nabla f(x_*)=0$, contradicting
$\cK_c=\emptyset$. Choose $\eta$ so that $2\eta<c$.

Define the Lipschitz cutoff function
\[
 \alpha(\lambda)=
 \begin{cases}
  1,&|\lambda-c|\le\eta,\\
  2-|\lambda-c|/\eta,&\eta<|\lambda-c|<2\eta,\\
  0,&|\lambda-c|\ge2\eta.
 \end{cases}
\]
It takes values in $[0,1]$ and has Lipschitz constant $1/\eta$.
Consider the vector field
\begin{equation}\label{eq:flow-field}
 X(x)=-\alpha(f(x))\beta(x)
       \frac{\nabla f(x)}{\max\{\norm{\nabla f(x)}^2,\sigma^2\}}.
\end{equation}
In this step, take $\beta\equiv1$. Let $r_x=\|\nabla f(x)\|$. According to the definitions, $0\le\alpha,\beta\le1$ and
\[
 \|X(x)\|\le\frac{r_x}{\max\{r_x^2,\sigma^2\}}\le\frac1\sigma.
\]
Since the denominator is bounded away from zero, so $X$ is locally Lipschitz. The local existence and uniqueness
of its flow follow by applying the contraction principle to
$x(t)=x(0)+\int_0^tX(x(r))\,\mathrm dr$ on a small time interval.

Let $[0,b_*)$ be the maximal forward interval of $x(t)=\Phi_t(x_0)$ and suppose
$b_*<\infty$. For $0\le s<t<b_*$,
\[
 \|x(t)-x(s)\|
 \le\int_s^t\|X(x(r))\|\,\mathrm dr
 \le\frac{|t-s|}{\sigma}.
\]
Thus for every $\epsilon>0$, the values at any two times in
$(b_*-\sigma\epsilon,b_*)$ differ by less than $\epsilon$. The completeness of $H$ gives a unique
limit $x_*=\lim_{t\to b_*}x(t)$. Extend $x$ continuously by $x(b_*)=x_*$.
The integral equation passes to $t=b_*$, since $X$ is bounded and continuous.
Local existence at $x_*$ supplies a solution for times greater than $b_*$,
and joining it to $x$ attains a solution across $b_*$, with matching derivative
$X(x_*)$. The uniqueness makes it to be an extension of the original trajectory,
contradicting the maximality. So $b_*=\infty$.

The local continuous dependence follows from the same contraction construction
and Gronwall's inequality. Repeating it over finitely many local time
intervals covering any fixed compact trajectory segment proves continuity
of $x_0\mapsto\Phi_t(x_0)$.

For $x(t)=\Phi_t(x)$, the chain rule gives
\[
 \begin{aligned}
 \frac{\mathrm d}{\mathrm dt}f(x(t))
 =Df(x(t))[X(x(t))]=-\alpha(f(x(t)))\beta(x(t))
 \frac{\|\nabla f(x(t))\|^2}
      {\max\{\|\nabla f(x(t))\|^2,\sigma^2\}}.
 \end{aligned}
\]
When $\beta\equiv1$, in the central band
$|f(x(t))-c|\le\eta$, one has $\alpha=1$ and
$\|\nabla f\|\ge\sigma$ by \eqref{eq:empty-gradient}. Therefore, the fraction is exactly $1$, and the derivative equals $-1$.
For any points the bounds are
\[
 -1\le\frac{\mathrm d}{\mathrm dt}f(x(t))\le0.
\]
The derivative is zero where a cutoff vanishes. The unit descent rate is used only on the region where both cutoffs are one and the gradient lower bound holds.

Choose $0<\varepsilon<\min\{\eta,c/4\}$ and $\gamma\in\Gamma$ with
$\max_s f(\gamma(s))\le c+\varepsilon$. Define
$\widetilde\gamma(s)=\Phi_{2\varepsilon}(\gamma(s))$.
The endpoint values are $0$ and a negative number, both below
$c-2\eta>0$. Thus $X=0$ at both endpoints, and uniqueness makes them
to be fixed by the flow. Consequently $\widetilde\gamma\in\Gamma$.

For fixed $s$, let $x_s(t)=\Phi_t(\gamma(s))$, $0\le t\le2\varepsilon$.
If $f(x_s(2\varepsilon))\ge c$, the monotonicity gives
\[
 c\le f(x_s(2\varepsilon))\le f(x_s(t))
       \le f(x_s(0))\le c+\varepsilon<c+\eta.
\]
The entire segment lies in $[c,c+\eta]$, and hence
\[
 f(x_s(2\varepsilon))=f(x_s(0))+\int_0^{2\varepsilon}(-1)\dd=f(x_s(0))-2\varepsilon
       \le c-\varepsilon<c,
\]
a contradiction. Therefore $f(\widetilde\gamma(s))<c$ for every $s$.
The continuous function $f\circ\widetilde\gamma$ attains a maximum
strictly below $c$ on $[0,1]$, contradicting the definition of $c$.
This proves $\cK_c\ne\emptyset$.

Every sequence in $\cK_c$ is a P.S. sequence at $c$, and its
strongly convergent subsequence has limit in $\cK_c$ by continuity.
Thus $\cK_c$ is sequentially compact and hence compact in the Hilbert
norm metric.

\medskip

{\bf Step 2: $\cK_c\cap\overline{\cC}\ne\emptyset$.}

\medskip

Suppose that $\cK_c\cap\overline{\cC}=\emptyset$. The distance between
the compact set $\cK_c$ and the closed set $\overline{\cC}$ is positive. Choose $r>0$ such that
\begin{equation}\label{eq:separation}
 \dist(\cK_c,\overline{\cC})>4r.
\end{equation}
The P.S. condition again gives $\eta,\sigma>0$, with $2\eta<c$,
such that
\begin{equation}\label{eq:gradient-away}
 \|\nabla f(x)\|\ge\sigma
 \quad\text{if }|f(x)-c|\le2\eta
 \text{ and }\dist(x,\cK_c)\ge r.
\end{equation}
Otherwise, it would yield $x_j\in H$ with
\[
 |f(x_j)-c|\le1/j,\qquad \|\nabla f(x_j)\|<1/j,
 \qquad \dist(x_j,\cK_c)\ge r.
\]
A P.S. subsequence converges strongly to $x_*\in\cK_c$,
whereas
\[
 r\le\dist(x_j,\cK_c)\le\|x_j-x_*\|\longrightarrow0
\]
along that subsequence, a contradiction.

Choose $\alpha$ as above, with the present $\eta$, and set
$d(x)=\dist(x,\cK_c)$. Define
\[
 \beta(x)=
 \begin{cases}
  0,&d(x)\le r,\\
  (d(x)-r)/r,&r<d(x)<2r,\\
  1,&d(x)\ge2r.
 \end{cases}
\]
Since $|d(x)-d(y)|\le\|x-y\|$, the function $\beta$ has Lipschitz constant $1/r$. In this step, we use \eqref{eq:flow-field} with these cutoffs and $\sigma$ here. Its local Lipschitz regularity, global flow and speed
bound $1/\sigma$ can be established as in Step 1. If $\alpha(f(x))=\beta(x)=1$, then $|f(x)-c|\le\eta$ and
$d(x)\ge2r$.  Therefore, it follows from \eqref{eq:gradient-away} that $f$ decreases at exactly unit speed.

Choose $0<\varepsilon<\min\{\eta,c/4\}$ small enough such that
\begin{equation}\label{eq:displacement-bound}
 2\varepsilon/\sigma<r.
\end{equation}

Deform an path $\gamma\in\Gamma$ of height at most $c+\varepsilon$ by the flow $\widetilde\gamma(s)=\Phi_{2\varepsilon}(\gamma(s))$ of vector field $X$ for time $2\varepsilon$. Its endpoints will be kept unchanged. We claim
\begin{equation}\label{eq:near-critical-set}
 f(\widetilde\gamma(s))\ge c
 \quad\Longrightarrow\quad
 \dist(\widetilde\gamma(s),\cK_c)<3r.
\end{equation}

Fix $s\in[0,1]$ and write
\[
 x(t)=\Phi_t(\gamma(s)),\quad\forall 0\le t\le2\varepsilon,
 \qquad z=x(2\varepsilon)=\widetilde\gamma(s).
\]
In fact, suppose that $f(z)\ge c$ and $\dist(z,\cK_c)\ge3r$.
The speed bound implies that, for every $t\in[0,2\varepsilon]$, by (\ref{eq:displacement-bound}) it yields
\[
 \|x(t)-z\|\le\int_t^{2\varepsilon}\|X(x(\theta))\|\,\mathrm d\theta
       \le\frac{2\varepsilon-t}{\sigma}
       \le\frac{2\varepsilon}{\sigma}<r.
\]
For every $p\in\cK_c$, the triangle inequality gives
$\|x(t)-p\|\ge\|z-p\|-\|x(t)-z\|$. Taking the infimum over $p$ yields
\[
 \dist(x(t),\cK_c)\ge\dist(z,\cK_c)-\|x(t)-z\|>3r-r=2r.
\]
Therefore $\beta(x(t))=1$. The monotonicity also gives
\[
 c\le f(z)\le f(x(t))\le f(x(0))\le c+\varepsilon<c+\eta,
\]
so $\alpha(f(x(t)))=1$ and \eqref{eq:gradient-away} gives
$\|\nabla f(x(t))\|\ge\sigma$. Therefore, on the entire segment, we have
$\frac{\mathrm d}{\mathrm dt}f(x(t))=-1$. Integrating yields
\[
 f(z)=f(\gamma(s))-2\varepsilon
       \le c+\varepsilon-2\varepsilon=c-\varepsilon<c,
\]
which contradicts to $f(z)\ge c$. Thus the claim (\ref{eq:near-critical-set}) is proved.

Since $\widetilde\gamma\in\Gamma$, its maximum is at least $c$.
As $f(\widetilde\gamma(0))=0<c$, the continuity implies that
\[
 S=\{s\in[0,1]:f(\widetilde\gamma(s))=c\}\ne\emptyset.
\]
It is compact, so $s_0=\min S$ exists, positive and satisfies $f(\widetilde\gamma(s_0))=c$.
For $s<s_0$ one has $f(\widetilde\gamma(s))<c$. For such $s$, we consider the following parametrized deformation
\[
 \eta_s(\theta)=\widetilde\gamma(s\theta),\qquad0\le\theta\le1.
\]
It starts at $0$, ends at $\widetilde\gamma(s)$, and stays in $\{f<c\}$.
Therefore $\widetilde\gamma(s)\in\cC$.
Taking $s_j\uparrow s_0$ gives
$\widetilde\gamma(s_0)\in\overline{\cC}$. Estimate
\eqref{eq:near-critical-set} gives distance less than $3r$ from this point
to $\cK_c$, whereas \eqref{eq:separation} makes that distance greater than
$4r$, a contradiction. Thus $\cK_c\cap\overline{\cC}\ne\emptyset$.
\end{proof}

\begin{corollary}\label{cor:selected-solution}
Assume that $V$ satisfies ($\mathrm{V1'}$) and ($\mathrm{V2}$)-($\mathrm{V5}$). Then, for every $T>0$, there is a nonconstant classical solution $q\in\cS_T$ of \eqref{eq:system} such that
\begin{equation}\label{eq:selected-solution}
 D(\psi_T|_{\cS_T})(q)=0,\qquad
 \psi_T(q)=c_T>0,\qquad q\in\overline{\cC_T}.
\end{equation}
Moreover,
\begin{equation}\label{eq:no-negative-in-component}
 \cC_T\cap\{\psi_T<0\}=\emptyset.
\end{equation}
\end{corollary}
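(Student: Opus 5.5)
The plan is to apply Lemma~\ref{lem:selection} with $H=\cS_T$, equipped with the Hilbert norm $\|\cdot\|_T$ (or the equivalent $H^1$ norm), and with $f=\psi_T|_{\cS_T}$. The hypotheses are supplied by earlier results.
\begin{enumerate}
\item Lemma~\ref{lem:geometry} gives that $\cS_T$ is a closed subspace, hence a Hilbert space, and that $f\in C^2$.
\item Since $V(0)=0$, we have $f(0)=0$.
\item Lemma~\ref{lem:geometry} also gives $\Gamma_T\ne\emptyset$ and $0<c_T<\infty$.
\item Lemma~\ref{lem:ps} gives the full Palais--Smale condition, which in particular holds at level $c_T$.
\end{enumerate}
One checks that the norm choice is harmless. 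The classes $\Gamma_T$, the value $c_T$, the set $\cC_T$ and closures are the same under equivalent norms. The P.S. condition, stated with the dual norm of $\|\cdot\|_T$, is likewise equivalent.

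Lemma~\ref{lem:selection} then yields $q\in\cK_{c_T}\cap\overline{\cC_T}$, that is, $D(\psi_T|_{\cS_T})(q)=0$, $\psi_T(q)=c_T>0$ and $q\in\overline{\cC_T}$. Because $q$ is a critical point with positive action, the second part of Lemma~\ref{lem:ps} shows it is a nonconstant classical ($C^3$, hence $C^2$) solution of \eqref{eq:system}. Nonconstancy is also direct: the only constant in $\cS_T$ is $0$, and it has action $0\neq c_T$.

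For \eqref{eq:no-negative-in-component}, argue by contradiction. Suppose $u\in\cC_T$ with $\psi_T(u)<0$. By the description of $\cC_T$ given before Lemma~\ref{lem:geometry}, there is $\eta\in C([0,1],A_T)$ with $\eta(0)=0$ and $\eta(1)=u$. Then $\eta\in\Gamma_T$. Since $[0,1]$ is compact and $\psi_T\circ\eta$ is continuous with values $<c_T$, its maximum is attained and is strictly below $c_T$. This contradicts the definition of $c_T$ as an infimum.

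No step is a real obstacle. The only point needing care is checking that Lemma~\ref{lem:selection} applies on the subspace $\cS_T$ with its own gradient (the Riesz representative in $\cS_T$), which is exactly the derivative of the restriction used in Lemma~\ref{lem:ps}.
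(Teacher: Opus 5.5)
Your proposal is correct and follows the paper's own proof. You apply Lemma~\ref{lem:selection} on $H=\cS_T$ with $f=\psi_T|_{\cS_T}$, verify its hypotheses via Lemmas~\ref{lem:geometry} and~\ref{lem:ps}, and obtain \eqref{eq:no-negative-in-component} by observing that a path in $\cC_T$ from $0$ to a negative-action point would belong to $\Gamma_T$ with maximum strictly below $c_T$. Your extra remarks on norm equivalence and the direct nonconstancy check are harmless additions to that argument.
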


\begin{proof}
Apply Lemma~\ref{lem:selection} to $H=\cS_T$ with its derivative inner product and $f=\psi_T|_{\cS_T}$. Lemmas~\ref{lem:geometry} and~\ref{lem:ps} verify all assumptions and obtain a nonconstant classical solution. If a negative action point belongs to $\cC_T$, a path within this component from zero to that point would have compact image and maximum strictly below $c_T$. It would be an element of $\Gamma_T$, contradicting \eqref{eq:minimax}.
\end{proof}

\subsection{Local geometry of strict sublevels}

For a continuous symmetric bilinear form $B$ on a Hilbert space, its radical is defined by
\[
 \rad(B)=\{h:B(h,w)=0\text{ for every }w\}.
\]
An isotropic vector is one satisfying $B(h,h)=0$. Such a vector need not lie in the radical. This distinction will be important for the cutoff velocity fields.

\begin{lemma}\label{lem:local-exits}
Let $H$ be a real Hilbert space, $f\in C^2(H,\R)$, $Df(x_0)=0$, and $f(x_0)=c>0$. Write $B=D^2f(x_0)$.

If $B(v,v)<0$ for some $v\in H$, let $Y=\ker B(v,\cdot)$.
Then there exist open neighborhoods $\mathcal U\subset H$ of $x_0$ and
$\mathcal V\subset\R\times Y$ of $(0,0)$, and a homeomorphism
$\Theta:\mathcal U\to\mathcal V$ with $\Theta(x_0)=(0,0)$, such that
\begin{equation}\label{eq:concave-form}
 \widetilde f(\xi,y):=f(\Theta^{-1}(\xi,y))
        =c+g(y)-\xi^2,
\end{equation}
where $g$ is a continuous function defined near zero in $Y$ and $g(0)=0$.
There are two fixed local axis points $P_+,P_-$ and an open neighborhood
$O$ of $x_0$ such that every point of $O\cap\{f<c\}$ can be joined to
at least one of $P_+,P_-$ by a path in $\{f<c\}$.

Suppose in addition that $B(h,h)=0$ and $h\notin\rad(B)$. Then every sufficiently nearby point of $\{f<c\}$ can be joined in $\{f<c\}$ to a point of negative value if either of the following conditions holds:
\begin{enumerate}
 \item There are continuous paths $U:[0,s_1]\to H$ and $W:[0,s_2]\to H$, with $s_1,s_2>0$, starting at $x_0$, lying in $\{f<c\}$ for every positive parameter, and ending in $\{f<0\}$, such that
 \[
 U_s=x_0+sh+o(s),\qquad W_s=x_0-dsh+o(s)
 \quad(s\downarrow0)
 \]
 for some $d>0$.
 \item The form $B$ has a two-dimensional negative definite subspace, and there is a path $U$ with the properties above and tangent $h$.
\end{enumerate}
\end{lemma}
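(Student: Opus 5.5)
The plan has three stages: build the coordinate $\Theta$ by a one-dimensional Morse lemma with parameters, read off the two exits $P_\pm$ from the normal form, and then use the hypotheses (i) or (ii) to join both exits to negative values.

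\textbf{The chart $\Theta$.} Translate so that $x_0=0$ and normalize $v$ so that $B(v,v)=-2$. Then $H=\R v\oplus Y$ with $Y=\ker B(v,\cdot)$, a closed complement since $B(v,v)\ne0$. Write $x=tv+y$ and $F(t,y)=f(x_0+tv+y)-c$. Since $\partial_t^2F(0,0)=-2<0$, the implicit function theorem applied to $\partial_tF(t,y)=0$ gives, for $y$ near $0$, a unique $C^1$ function $t=\tau(y)$ with $\tau(0)=0$ at which $F(\cdot,y)$ has its local maximum in $t$.

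Set $g(y)=F(\tau(y),y)$. Taylor's formula with integral remainder in the single variable $t$ gives
\[
F(t,y)=g(y)-(t-\tau(y))^2a(t,y),\qquad a(t,y)=-\int_0^1(1-\theta)\,\partial_t^2F\bigl(\tau(y)+\theta(t-\tau(y)),y\bigr)\,\mathrm d\theta .
\]
Here $a$ is continuous and $a(0,0)=1$. Define $\xi=(t-\tau(y))\sqrt{a(t,y)}$ and $\Theta(x)=(\xi,y)$. For fixed $y$, the map $t\mapsto\xi$ is strictly increasing near $0$, since $\partial_t\xi$ is close to $1$ by a direct estimate of the difference quotient using $C^2$. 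Continuity of the inverse then follows from a uniform version of this monotonicity, so $\Theta$ is a homeomorphism between small neighborhoods and \eqref{eq:concave-form} holds.

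\textbf{The two exits $P_\pm$.} Take a product neighborhood $\mathcal V=(-\delta,\delta)\times B_Y(\epsilon)$ with $\epsilon$ chosen so that $|g|<\delta^2/4$ on $B_Y(\epsilon)$, possible since $g(0)=0$ and $g$ is continuous. Set $P_\pm=\Theta^{-1}(\pm\delta/2+\text{tiny},0)$, or simply $\Theta^{-1}(\pm 3\delta/4,0)$, and put $O=\Theta^{-1}(\mathcal V)$.

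Take a point $(\xi,y)$ with $g(y)<\xi^2$ and suppose $\xi\ge0$. Move $|\xi|$ outward to $3\delta/4$ along $\xi$; the value $g(y)-\xi^2$ only decreases. Then move $y$ linearly to $0$ at fixed $\xi=3\delta/4$, where $g(y)-\xi^2<\delta^2/4-9\delta^2/16<0$. If $\xi=0$ the point is not in $\{f<c\}$ unless $g(y)<0$; in that case first move to $\xi=3\delta/4$, which keeps the value below $c$. This proves the first assertion.

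\textbf{Case (i).} It now suffices to join $P_+$ and $P_-$ to $\{f<0\}$. Since $B(h,h)=0$ and $h\notin\rad(B)$, there is $w$ with $B(h,w)\ne0$. I would choose $v=h+\lambda w$ with $\lambda$ small and of the sign making $B(v,v)=2\lambda B(h,w)+\lambda^2B(w,w)<0$.

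The main obstacle is to show that $U_s$ for small $s$ lies in the same $\xi$-half of the chart as one axis point, and $W_s$ in the other. From the normal form, $\xi(x_0+sh+o(s))$ equals $s$ times the $v$-component of $h$ in the splitting $H=\R v\oplus Y$, plus $o(s)$. That component is $B(h,v)/B(v,v)$, where $B(h,v)=\lambda B(h,w)$ is nonzero. Hence $U_s$ and $W_s$ have $\xi$-coordinates of opposite signs, because $-d$ reverses the sign. Within $\{f<c\}\cap O$, a point with $\xi>0$ joins to $P_+$ by the argument above, and one with $\xi<0$ to $P_-$. So $P_+$ is joined to $U$ and $P_-$ to $W$ (or the reverse), and both reach negative values.

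\textbf{Case (ii).} Let $N$ be a two-dimensional negative definite subspace. Then $N\cap Y\ne0$ because $Y$ has codimension one, so some $e\in Y$ has $B(e,e)<0$. Since $D_yg(0)=0$, the second derivative of $g$ along $e$ at $0$ is computed from $B$ and is negative; I would justify this via $g(y)=\max_tF(t,y)$ together with $F(0,y)\le g(y)$ and the bound $g(y)\le F(0,y)+O(\|y\|^2)$. It gives $g(se)<0$ for small $s$.

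The segment $\{(\xi,se):|\xi|\le3\delta/4\}$ then lies in $\{f<c\}$ and connects the $\xi>0$ and $\xi<0$ sides. Thus $P_+$ and $P_-$ lie in the same path component of $\{f<c\}$. By case (i)'s sign argument, $U$ reaches one of them, and therefore reaches both.
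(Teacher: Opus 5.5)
Your construction of the chart, of $P_\pm$ and $O$, and your treatment of case (i) follow the paper's proof essentially step by step; the paper's $v=h-\epsilon w$ with $B(h,w)>0$ is your $v=h+\lambda w$. In case (i) you use one fact without stating it: $D\tau(0)=0$. It follows by differentiating $\partial_tF(\tau(y),y)=0$ at $y=0$ and using $B(v,z)=0$ for $z\in Y$. This fact gives $\tau(y_s)=o(s)$, and hence that the $\xi$-coordinate of $x_0+sh+o(s)$ is $s$ times the $v$-component of $h$, plus $o(s)$. A small wording point: for $\xi\in(3\delta/4,\delta)$ the move to $3\delta/4$ is inward, so $g(y)-\xi^2$ increases there rather than decreasing. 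It still stays below $\delta^2/4-9\delta^2/16<0$, so that path is admissible.

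Case (ii) is where your route differs from the paper's. The paper joins $P_+$ and $P_-$ outside the product structure of the chart. With $z\in N\cap Y$, it uses the uniform expansion $f(x_0+rw_\theta)=c+\tfrac12r^2B(w_\theta,w_\theta)+o(r^2)$ on the semicircle $w_\theta=\cos\theta\,v+\sin\theta\,z$, $0\le\theta\le\pi$. It then slides along the $v$-axis to $P_\pm$. This needs only the second-order Taylor expansion of $f$ at $x_0$, and nothing about $g$ beyond continuity.

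You stay inside the chart instead. Once $g(se)<0$, the whole line $\{(\xi,se)\}$ lies in $\{f<c\}$. This gives a cleaner criterion: the two halves $\xi\gtrless0$ of the strict sublevel merge as soon as $g$ takes a negative value near $0$. The cost is that you need second-order information about $g$.

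That information is available, but your fallback justification is too weak as stated. The bounds $F(0,y)\le g(y)\le F(0,y)+O(\|y\|^2)$ do not force $g(se)<0$, because the $O(s^2)$ term may dominate $\tfrac12s^2B(e,e)$. Either of two repairs works:
\begin{enumerate}
\item Use $g(y)-F(0,y)=a(0,y)\tau(y)^2=o(\|y\|^2)$, which follows from $D\tau(0)=0$.
\item Note that $Dg(y)=\partial_yF(\tau(y),y)$. Then $g\in C^2$ with $Dg(0)=0$ and $D^2g(0)[e,e]=B(e,e)<0$.
\end{enumerate}
With either repair your case (ii) is complete.
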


\begin{proof}
Since $B(v,v)<0$, for $z\in H$, we define
\[
 p(z)=\frac{B(v,z)}{B(v,v)},\qquad y(z)=z-p(z)v.
\]
Then $B(v,y(z))=0$, so $y(z)\in Y$ and $z=p(z)v+y(z)$.
If $av\in Y$, then $0=aB(v,v)$, hence $a=0$, and the decomposition is
unique. Continuity of $B$ makes $p$ and $y$ to be bounded linear maps, and $Y$
is closed. Thus $H=\R v\oplus Y$ is a topological direct sum.

Define the scalar $C^1$ map
\[
 F(t,y)=Df(x_0+tv+y)[v],\qquad\forall (t,y)\in\R\times Y.
\]
At the origin,
\[
 F(0,0)=0,\qquad\partial_tF(0,0)=B(v,v)<0,
 \qquad D_yF(0,0)[z]=B(z,v)=0\quad(z\in Y).
\]
Therefore, the implicit function theorem gives a $C^1$ function
$t=\varphi(y)$ near $0$ in $Y$ satisfying
$F(\varphi(y),y)=0$ and $\varphi(0)=0$.
Differentiating this identity in an arbitrary direction $z\in Y$ at zero, it yields
\[
 B(v,v)D\varphi(0)[z]+B(z,v)=0.
\]
Thus $D\varphi(0)=0$ since $B(z,v)=0$ for $z\in Y$. In particular, the Fr\'echet differentiability gives
$\varphi(y)=o(\|y\|)$ as $y\to0$ in $Y$.

For $y$ near zero, set
\[
 F_y(t)=f(x_0+(\varphi(y)+t)v+y),\qquad
 g(y)=F_y(0)-c.
\]
The implicit equation gives $F_y'(0)=0$. By continuity of $D^2f$ and
$B(v,v)<0$, there exist $\lambda,\Lambda>0$ and a product neighborhood in
$(t,y)$ on which $\lambda\le-F_y''(t)\le\Lambda$. The Taylor's formula with Cauchy integral remainder, for both positive and negative $t$, gives
\begin{equation}\label{eq:ridge-expansion}
 F_y(t)=c+g(y)-a(t,y)t^2,
\end{equation}
where
\[
 a(t,y)=-\int_0^1(1-\theta)
 D^2f(x_0+(\varphi(y)+\theta t)v+y)[v,v]\,\mathrm d\theta.
\]
The function $a$ is continuous and satisfies
$\lambda/2\le a(t,y)\le\Lambda/2$. Define
\[
 \chi(t,y)=t\sqrt{a(t,y)},\qquad \xi=\chi(t,y).
\]
For fixed $y$, the strict negativity of $F''_y$ and $F_y'(0)=0$ show that $F_y'(t)>0$
for $t<0$ and $F_y'(t)<0$ for $t>0$. So the two branches
$-\sqrt{F_y(0)-F_y(t)}$ for $t<0$ and
$\sqrt{F_y(0)-F_y(t)}$ for $t>0$ are strictly increasing.
They meet at $\chi(0,y)=0$, so $t\mapsto\chi(t,y)$ is strictly
increasing on the whole interval.

For fixed $y$, next we verify the uniform bound on $\partial_t\chi(t,y)$. Here $t$ is a scalar displacement from
$(x_0+\varphi(y)v+y)$ in the direction $v$. For this derivative,
\[
 \begin{aligned}
 \partial_t\chi(0,y)
 &=\lim_{t\to0}\frac{\chi(t,y)-\chi(0,y)}{t}
   =\lim_{t\to0}\sqrt{a(t,y)}=\sqrt{a(0,y)},\\
 a(0,y)
 &=-D^2f(x_0+\varphi(y)v+y)[v,v]
                         \int_0^1(1-\theta)\,\mathrm d\theta\\
 &=-\tfrac12D^2f(x_0+\varphi(y)v+y)[v,v].
 \end{aligned}
\]
This proves the positive value without differentiating the
merely continuous function $a$. The derivative $F_y'(0)$ in the $v$ direction at $(x_0+\varphi(y)v+y)$  is zero.


Let $b(t,y)=-F_y''(t)$, and then $\lambda\le b\le\Lambda$. Note that $\chi(t,y)^2=g(y)-F_y(t)+c$, we have
\[
 F_y'(t)=-t\int_0^1 b(\theta t,y)\,\mathrm d\theta,\quad
 \partial_t\chi(t,y)=
 \frac{\int_0^1 b(\theta t,y)\,\mathrm d\theta}{2\sqrt{a(t,y)}},\quad t\neq 0,
\]
which, together with the value at zero, these formulas imply, throughout a small
coordinate neighborhood,
\[
 0<\frac{\lambda}{2\sqrt{\Lambda/2}}
       \le\partial_t\chi(t,y)
       \le\frac{\Lambda}{2\sqrt{\lambda/2}}<\infty.
\]
The lower bound supplies the uniform Lipschitz bound for the scalar inverse.

Choose $\delta>0$ and $r_*>0$ such
that the preceding estimates hold for $|t|\le\delta$ and
$y\in B_Y(0,r_*)$. Since $a(t,y)\ge \lambda/2>0$, we obtain
\[
 \chi(\delta,y)\ge\delta\sqrt{\lambda/2},\qquad
 \chi(-\delta,y)\le-\delta\sqrt{\lambda/2}.
\]
Fix $0<\rho_*<\delta\sqrt{\lambda/2}$.
Strict monotonicity and the intermediate value theorem give, for each
$(\xi,y)\in(-\rho_*,\rho_*)\times B_Y(0,r_*)$, a unique
$t=\vartheta(\xi,y)\in(-\delta,\delta)$ with $\chi(t,y)=\xi$.

A product neighborhood here is exactly the Cartesian product
\[
 \mathcal V_0=(-\rho_*,\rho_*)\times B_Y(0,r_*).
\]
It is open in $\R\times Y$, and the allowed interval of $\xi$ is the
same for every $y$ in the indicated ball.

Let $m_*>0$ be the uniform lower bound for $\partial_t\chi$.
The mean value theorem gives
\[
 |\vartheta(\xi_1,y)-\vartheta(\xi_2,y)|\le m_*^{-1}|\xi_1-\xi_2|.
\]
If $(\xi_j,y_j)\to(\xi,y)$, set $t_*=\vartheta(\xi,y)$. For all large $j$,
\[
 |\vartheta(\xi_j,y_j)-t_*|
 \le m_*^{-1}|\xi_j-\chi(t_*,y_j)|\longrightarrow0,
\]
by continuity of $\chi$. This proves the continuity of $\vartheta$.
The inverse chart in the Hilbert space is
\[
 \Theta^{-1}(\xi,y)=x_0+(\varphi(y)+\vartheta(\xi,y))v+y.
\]
Its inverse uses the bounded projection $p$, the map
$y=x-x_0-p(x-x_0)v$, and
$\xi=\chi(p(x-x_0)-\varphi(y),y)$, and these are continuous.
Thus it gives a homeomorphic chart on the open neighborhood
$\mathcal U_0=\Theta^{-1}(\mathcal V_0)$ of $x_0$.

Choose $0<2d_0<\rho_*$ and then $0<r_1<r_*$ so small that
$|g(y)|<d_0^2/4$ for $\|y\|<r_1$. Define
\[
 O=\Theta^{-1}\bigl((-d_0/2,d_0/2)\times B_Y(0,r_1)\bigr),
 \qquad P_\pm=\Theta^{-1}(\pm d_0,0).
\]
Then $O$ is an open neighborhood of $x_0$ in the larger coordinate
neighborhood $\mathcal U_0$. The connecting paths below are constructed
in $\mathcal U_0$ and may leave $O$.

Let $x\in O$ have coordinates $(\xi,y)$ and satisfy $g(y)-\xi^2<0$.
Choose $e\in\{1,-1\}$ to agree with the sign of $\xi$; when $\xi=0$
choose either sign, since then $g(y)<0$.
The first path in coordinates is
\[
 ((1-\theta)\xi+\theta e d_0,\ y),\qquad0\le\theta\le1.
\]
Its scalar coordinate has nondecreasing absolute value. Hence its value
of $f-c$ is at most $g(y)-\xi^2<0$.
The second path is
\[
 (e d_0,(1-\theta)y),\qquad0\le\theta\le1.
\]
The ball in $Y$ is convex, so this path has
$f-c=g((1-\theta)y)-d_0^2<d_0^2/4-d_0^2<0$.
Composing both paths with $\Theta^{-1}$, and the concatenation joins $x$ to
$P_e$ within $\{f<c\}$. Since $\varphi(0)=0$ and
$\vartheta(\xi,0)$ has the sign of $\xi$, the points $P_+$ and $P_-$ lie on
opposite $v$ rays in $H$.

Since $h\notin\rad(B)$, choose $w$ with $B(h,w)\ne0$; changing its sign
makes $B(h,w)>0$. Put $v=h-\epsilon w$. For sufficiently small
$\epsilon>0$, isotropy of $h$ gives
\begin{equation}\label{eq:negative-direction}
 \begin{aligned}
 B(v,v)&=-2\epsilon B(h,w)+\epsilon^2B(w,w)<0,\\
 B(v,h)&=-\epsilon B(w,h)\ne0.
 \end{aligned}
\end{equation}
Use the above chart for this $v$. Let $x_s=x_0+sh+r_s$ with
$\|r_s\|=o(s)$, we define
\[
 p_s=\frac{B(v,x_s-x_0)}{B(v,v)},\qquad
 y_s=x_s-x_0-p_sv,\qquad t_s=p_s-\varphi(y_s).
\]
The bounded projections give
\[
 p_s=\frac{B(v,h)}{B(v,v)}s+o(s),\qquad \|y_s\|=O(s).
\]
Since $\varphi(0)=0$ and $D\varphi(0)=0$, one has $\varphi(y_s)=o(s)$. Therefore
$t_s=sB(v,h)/B(v,v)+o(s)$. Also
$a(t_s,y_s)\to a(0,0)=-B(v,v)/2>0$. It follows that
\begin{equation}\label{eq:tangent-coordinate}
 \xi(s)=\chi(t_s,y_s)=t_s\sqrt{a(t_s,y_s)}
 =\sqrt{\frac{-B(v,v)}{2}}\,\frac{B(v,h)}{B(v,v)}s+o(s).
\end{equation}
It follows from \eqref{eq:negative-direction} that the above coefficient is nonzero.
For a path tangent to $-dh$, the coefficient is multiplied by $-d$.

In case (i), (\ref{eq:tangent-coordinate}) shows that the $\xi$ coordinates of $U_s$ and $W_s$ have opposite
signs for all sufficiently small positive parameters. Choose
$s_U,s_W>0$ so small that $U_{s_U},W_{s_W}\in O$ and their scalar
coordinates have opposite signs. The constructed axis paths join
$U_{s_U}$ to one of $P_\pm$ and $W_{s_W}$ to the other.
Starting at an axis, reverse the appropriate connection and follow
$U|_{[s_U,s_1]}$ or $W|_{[s_W,s_2]}$ to its negative value endpoint.
Every parameter used on these paths is positive, so the concatenations
always stay in $\{f<c\}$ and do not pass through $x_0$. So each point of $O\cap\{f<c\}$ connects to a point of negative
value below $c$.

In case (ii), let $N$ be a two-dimensional negative definite subspace. The linear map
$B(v,\cdot): N\to\R$ has a nonzero kernel vector $z$.
Thus $z\in Y$, $B(z,z)<0$, and $B(v,z)=0$.
Let $w_\theta=\cos\theta\,v+\sin\theta\,z$, it yields
\[
 B(w_\theta,w_\theta)
   =\cos^2\theta B(v,v)+\sin^2\theta B(z,z)\le-\kappa,
\]
where $\kappa=\min\{-B(v,v),-B(z,z)\}>0$. The set $\{w_\theta:0\le\theta\le\pi\}$ is compact and bounded.
By the continuity of $D^2f$ at $x_0$ and $Df(x_0)=0$, we obtain
\[
 f(x_0+rw_\theta)=c+\tfrac12r^2B(w_\theta,w_\theta)+o(r^2),
\]
where the remainder is uniform in $\theta$. Therefore, for small $r>0$, the whole curve satisfies $f\le c-\kappa r^2/4<c$ and joins the two $v$ rays. It follow from the strict negativeness of $B(v,v)$ that small nonzero segments of these rays also lie below $c$. Hence $P_+$ and $P_-$ are connected below $c$. A small positive point of $U$ connects to one axis, and its positive-parameter path ends at negative value. Thus both axes, and hence every sufficiently nearby strict sublevel point, connect to negative value below $c$. This proves case (ii) and the lemma.
\end{proof}

\subsection{Negative directions along odd iterates}

The next lemma gives the part of the cutoff Jacobi field construction needed here. It is consistent with the odd iteration estimates of Long~\cite{Long1993}, but the required negative directions are constructed directly.

\begin{lemma}\label{lem:cutoff}
Let $V\in C^2(\R^n,\R)$ be even with $V(0)=0$, and let $q\in\cS_T$ be a nonconstant classical solution of \eqref{eq:system}. Suppose its minimal period is $\tau=T/k$, where $k=2m+1\ge3$. Set $a=\tau/4$ and $L=ka$. On $X_L$, let
\begin{equation}\label{eq:interval-hessian}
 B(u,w)=\int_0^L\bigl(\langle u',w'\rangle
                    -\langle D^2V(q(t))u,w\rangle\bigr)\dd.
\end{equation}
Then $B$ has a negative definite subspace of dimension $m$.
In particular it has a negative two-plane if $k\ge5$. The parameters satisfy
\[
 a=\frac{T}{4k},\qquad L=\frac T4,\qquad
 b=2ma=\frac{mT}{2k}=L-a\in(0,L).
\]
The field
\begin{equation}\label{eq:cutoff-h}
 h(t)=\begin{cases}
       q'(t),&0\le t\le b,\\
       0,&b\le t\le L
      \end{cases}
\end{equation}
belongs to $X_L$, is isotropic, and is not in $\rad(B)$.
\end{lemma}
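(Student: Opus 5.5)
The plan is to exploit the symmetries of $q$ to chop $[0,b]$ into $m$ intervals of length $2a$, each with $q'=0$ at both ends. On each interval the restriction of the Jacobi field $q'$ gives an isotropic vector of $B$. Perturbing these vectors at their right endpoints then produces an $m$-dimensional negative definite space.

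\emph{Step 1: symmetries.} By Lemma~\ref{lem:periods}, $q\in\cS_\tau$, so $q$ is even and $\tau/2$-antiperiodic. Combining these gives $q(\tau/2+t)=-q(t)=q(\tau/2-t)$. Hence $q$ is even about every multiple of $2a=\tau/2$ and odd about every odd multiple of $a$. Therefore
\[
 q'(2ja)=0,\qquad q((2j+1)a)=0\qquad(j\in\Z),\qquad q(2ja)=\pm q(0).
\]
In particular $q'(b)=0$ with $b=2ma$. So $h$ in \eqref{eq:cutoff-h} is continuous and piecewise $C^2$, vanishes on $[b,L]$, and lies in $X_L$; the parameter identities are arithmetic. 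Next, $q''(2ja)=-\nabla V(q(2ja))=\mp\nabla V(q(0))$ because $\nabla V$ is odd. This is nonzero: otherwise $q(0)$ would be an equilibrium with $q'(0)=0$, and uniqueness for the $C^2$ flow would make $q$ constant. Put $c:=|q''(2ja)|^2=|\nabla V(q(0))|^2>0$, which is independent of $j$.

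\emph{Step 2: boundary formula.} For $1\le j\le m$ let $h_j=q'\,\mathbf 1_{[2(j-1)a,2ja]}\in X_L$, so $h=\sum_j h_j$. Differentiating the equation gives $q'''+D^2V(q)q'=0$. Integrating by parts on the support of $h_j$ then gives, for every $w\in X_L$,
\[
 B(h_j,w)=\langle q''(2ja),w(2ja)\rangle-\langle q''(2(j-1)a),w(2(j-1)a)\rangle .
\]
Taking $w=h_i$ and using $q'=0$ at every node gives $B(h_i,h_j)=0$. Thus $B(h,h)=0$, so $h$ is isotropic. For the radical, choose $w$ supported in a small neighbourhood of $b$ inside $(b-a,L)$ with $w(b)=q''(b)$. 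Then $B(h,w)=B(h_m,w)=c\ne0$, so $h\notin\rad(B)$.

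\emph{Step 3: negative subspace, the main point.} For each $j$ choose $w_j\in X_L$ supported in a small neighbourhood of $2ja$, with these supports pairwise disjoint and disjoint from $\{0,L\}$, and with $w_j(2ja)=q''(2ja)$. Set $v_j=h_j-\epsilon w_j$. Then
\[
 B(v_i,v_j)=-\epsilon M_{ij}+\epsilon^2B(w_i,w_j),\qquad
 M_{ij}=B(h_i,w_j)+B(h_j,w_i).
\]
By the boundary formula, $B(h_i,w_j)$ equals $c$ if $i=j$, equals $-c$ if $i=j+1$, and vanishes otherwise. Hence $M=c\,\mathrm{tridiag}(-1,2,-1)$, the Dirichlet discrete Laplacian, which is positive definite. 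For small $\epsilon>0$ the Gram matrix $(B(v_i,v_j))$ is therefore negative definite. The $v_j$ are linearly independent, since $v_j$ restricted to the interior of $[2(j-1)a,2ja]$ away from the nodes equals $q'\not\equiv0$. So $\operatorname{span}\{v_j\}$ is an $m$-dimensional negative definite subspace, and for $k\ge5$ we have $m\ge2$, giving a negative two-plane.

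The main obstacle is the coupling between consecutive $h_j$ at shared nodes, which rules out perturbing each one independently. The equality of the constants $c_j=c$, forced by evenness of $V$, together with positivity of the discrete Laplacian resolves it.
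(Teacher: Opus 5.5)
Your argument is correct. Steps~1 and~2 essentially match the paper's proof: the same nodes $t_j=2ja$, the same pieces $h_j$, the same boundary pairing $B(h_j,w)=\langle\ddot q(t_j),w(t_j)\rangle-\langle\ddot q(t_{j-1}),w(t_{j-1})\rangle$, and the same bump at $b$ to show $h\notin\rad(B)$. Your Step~3, however, builds the negative subspace differently. The paper never evaluates $B(h_i,\cdot)$ on specific bumps. It shows the functionals $B(h_1,\cdot),\dots,B(h_m,\cdot)$ are linearly independent, by testing a putative relation on a bump at the largest node with nonzero coefficient. It then concludes that $w\mapsto(B(h_1,w),\dots,B(h_m,w))$ maps $X_L$ onto $\R^m$, and picks a biorthogonal family with $B(h_i,w_j)=\delta_{ij}$. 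The Gram matrix of the $h_j-\epsilon w_j$ is then $-2\epsilon I+\epsilon^2G$. You instead take explicit localized bumps, obtain the lower bidiagonal coupling matrix $B(h_i,w_j)$, and use positivity of its symmetrization, the discrete Dirichlet Laplacian. Your route is more concrete: the negative subspace is spanned by explicitly localized fields, the coupling is computed exactly, and no duality step is needed. The paper's route needs only $\ddot q(t_j)\ne0$ and no comparison between different nodes. Your use of the equal constants $c_j=c$ is also not essential: normalizing $w_j(2ja)=\ddot q(2ja)/|\ddot q(2ja)|^2$ gives exactly $\mathrm{tridiag}(-1,2,-1)$ whatever the sizes of the accelerations. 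Finally, your separate argument for linear independence of the $v_j$ is redundant. A negative definite Gram matrix already forces independence, which is how the paper concludes.
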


\begin{proof}
By Lemma~\ref{lem:periods}, it follows $q\in\cS_\tau$. Set $t_j=2ja$ for $0\le j\le m$. Then
\begin{equation}\label{eq:turning-data}
 q(t_j)=(-1)^j q(0),\qquad q'(t_j)=0,\qquad \ddot{q}(t_j)\ne0.
\end{equation}
Indeed, if both velocity and acceleration vanished at a turning point, its position would be a critical point of $V$. The uniqueness would make the solution to be constant. Since $q$ is $C^3$ from Lemma~\ref{lem:ps}, its velocity $y=q'$ satisfies
\begin{equation}\label{eq:jacobi}
 y''+D^2V(q)y=0.
\end{equation}

For $1\le j\le m$, let $h_j$ equal $q'$ on $[t_{j-1},t_j]$ and zero
elsewhere on $[0,L]$. Since $q'(t_{j-1})=q'(t_j)=0$, its values match
at both endpoints and $h_j\in X_L$. Its weak derivative equals
$\ddot{q}$ on $(t_{j-1},t_j)$ and zero elsewhere. Then the integration
by parts and \eqref{eq:jacobi} yield
\[
 \begin{aligned}
 B(h_j,w)
 &=\int_{t_{j-1}}^{t_j}
  \bigl(\langle\ddot{q},w'\rangle-
       \langle D^2V(q)q',w\rangle\bigr)\dd\\
 &=[\langle\ddot{q}(t),w(t)\rangle]_{t_{j-1}}^{t_j}
  -\int_{t_{j-1}}^{t_j}
  \left\langle\frac{\mathrm d}{\mathrm dt}\ddot{q}
            +D^2V(q)q',w\right\rangle\dd\\
 &=[\langle\ddot{q}(t),w(t)\rangle]_{t_{j-1}}^{t_j},\quad \forall w\in X_L.
 \end{aligned}
\]
Taking $w=h_j$ gives zero because $h_j$ vanishes at the two nodes.
For $i\ne j$, the supports of $h_i,h_j$ and their derivatives have
disjoint interiors, so we obtain
\begin{align}
 B(h_i,h_j)&=0\qquad(1\le i,j\le m),\label{eq:isotropic-space}\\
 B(h_j,w)&=\langle\ddot{q}(t_j),w(t_j)\rangle
       -\langle\ddot{q}(t_{j-1}),w(t_{j-1})\rangle.
       \label{eq:boundary-pairings}
\end{align}

We claim that the functionals $\{B(h_j,\cdot)\}_{j=1}^m$ are linearly independent. In fact, suppose $\sum_{j=1}^m\lambda_jB(h_j,w)=0$ for every $w\in X_L$ and some nonzero coefficient. Let $J$ be the largest index with
$\lambda_J\ne0$. Choose a smooth scalar function $\chi$ supported
in a sufficiently small interval about the interior point $t_J$, with
$\chi(t_J)=1$, whose support contains no other node and does not meet $L$.
Set $w(t)=\chi(t)\ddot{q}(t_J)$. Then $w\in X_L$ and
\[
 w(t_J)=\ddot{q}(t_J),\qquad w(t_i)=0,\  \forall i\ne J.
\]
Then the terms with $j<J$ vanish by their endpoint
pairings, and coefficients with $j>J$ are zero. Therefore, formula
\eqref{eq:boundary-pairings} gives \(\lambda_J|\ddot{q}(t_J)|^2=0.\)
This contradicts $\lambda_J\ne0$ and \eqref{eq:turning-data}.

Now we define
\[
 A:X_L\longrightarrow\R^m,\qquad
 A(w)=(B(h_1,w),\ldots,B(h_m,w)),
\]
and let $M=A(X_L)$. If $M$ is a proper subspace of
$\R^m$, there exists a nonzero vector $\lambda=(\lambda_1,\ldots,\lambda_m)\in M^\perp$.
Then the functional \( \ell_\lambda(z)=\sum_{i=1}^m\lambda_i z_i\ (\forall z\in\R^m)\) vanishes on $M$.
In particular, \(\ell_\lambda(A(w))=\sum_{i=1}^m\lambda_iB(h_i,w)=0\) for every $w\in X_L$, which implies that $\{B(h_j,\cdot)\}_{j=1}^m$ are linearly dependent. This contradiction shows that $M=\R^m$ and $A$ is surjective.

For the standard basis $\{e_j\}_{j=1}^m$ of $R^m$, choose $w_j\in X_L$ with $A(w_j)=e_j$; this gives
$B(h_i,w_j)=\delta_{ij}$ exactly. Let $G=(G_{ij})$ with $G_{ij}=B(w_i,w_j)$. Then it follows from
\eqref{eq:isotropic-space} that
\begin{align*}
 B\left(\sum_j\alpha_j(h_j-\epsilon w_j),
        \sum_j\alpha_j(h_j-\epsilon w_j)\right)
   &=-2\epsilon|\alpha|^2+\epsilon^2\alpha^{\mathsf T}G\alpha.
\end{align*}
For sufficiently small $\epsilon>0$, this is strictly negative if $\alpha\ne0$.
Therefore, the vectors $(h_j-\epsilon w_j)$ are linearly independent, and their span is a negative
definite subspace of dimension $m$.

So we have $h=\sum_{j=1}^m h_j\in X_L$. Then it follows from \eqref{eq:isotropic-space} and
\eqref{eq:boundary-pairings} that
\begin{equation}\label{eq:h-pairing}
 \begin{aligned}
 B(h,h)=0, \qquad B(h,w)=\langle\ddot{q}(b),w(b)\rangle
              -\langle\ddot{q}(0),w(0)\rangle.
 \end{aligned}
\end{equation}
Choose $w$ smooth and supported near the interior point $b\in (0,L)$, with $w(b)=\ddot{q}(b)$. Then $B(h,w)=\ddot{q}(b)|^2>0$. This proves $h\notin\rad(B)$.
\end{proof}

\section{Proof of Theorem 1.1}\label{sec:mainproof}

Firstly, Corollary~\ref{cor:selected-solution} gives a classical solution satisfying \eqref{eq:main-selection}. It remains to exclude a shorter minimal period. The exclusion uses only part of the assumptions in Section 1.

\begin{proposition}\label{prop:exit}
Let $V\in C^2(\R^n,\R)$ be even and nonnegative with $V(0)=0$. Suppose $q\in\cS_T$ is a nonconstant classical solution of \eqref{eq:system}, $c=\psi_T(q)>0$, and its minimal period is less than $T$. Then there is an open neighborhood $O$ of $q$ in $\cS_T$ such that every $u\in O$ with $\psi_T(u)<c$ can be joined to a negative action point by a continuous path in $\{\psi_T<c\}$.
\end{proposition}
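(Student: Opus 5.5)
The plan is to work entirely on the quarter space $X_L$ through the isomorphism \eqref{eq:quarter-space}, with $\psi_T=4I_L$ as in \eqref{eq:quarter-action}, and to apply Lemma~\ref{lem:local-exits} to $f=4I_L$ at $x_0=q$. There $Df(q)=0$, $f(q)=c>0$, and $B=4D^2I_L(q)$ is the form \eqref{eq:interval-hessian}, up to the factor $4$. Write $\tau=T/k$ with $k=2m+1\ge3$. Lemma~\ref{lem:cutoff} supplies three things:
\begin{enumerate}
 \item the isotropic vector $h$ of \eqref{eq:cutoff-h}, with $h\notin\rad(B)$;
 \item a vector $v$ with $B(v,v)<0$, so that the chart \eqref{eq:concave-form} and the axis points $P_\pm$ exist;
 \item for $k\ge5$, a negative two-plane.
\end{enumerate}
What remains is to build the surgery paths required in case (i) (for $k=3$) or case (ii) (for $k\ge5$) of Lemma~\ref{lem:local-exits}. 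The neighborhood $O$ in the proposition is then the one that lemma produces.

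The basic surgery uses the turning data \eqref{eq:turning-data}, conservation of energy $\tfrac12|q'|^2+V(q)\equiv E:=V(q(0))$, and evenness, which gives $V(q(t_j))=E$. For $0\le s\le b$ I would define
\[
 U_s(t)=\begin{cases} q(t+s),&0\le t\le b-s,\\ q(b),&b-s\le t\le b,\\ q(t),&b\le t\le L.\end{cases}
\]
This lies in $X_L$, is continuous in $s$ in $H^1$, and satisfies $U_s=q+sh+o(s)$, because $q(t+s)-q(t)=sq'(t)+o(s)$ uniformly in $C^1$ on $[0,b]$. Changing variables and using the energy identity, $\tfrac12|q'|^2-V(q)=|q'|^2-E$, the action loss is exactly
\[
 I_L(U_s)=I_L(q)-\int_0^s|q'(t)|^2\dd ,
\]
which is strictly below $I_L(q)$ for every $s>0$. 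For $k=3$ I would build the companion path $W$ by the mirror surgery: delete motion at the turning point $t=b$ from the other side and insert the waiting segment at $0$. The inserted segment then shifts the remaining arc the opposite way and removes two quarter-arcs' worth of tangent, which should yield $W_s=q-2sh+o(s)$, again with loss $-\int|q'|^2$. This is exactly the pair of tangents $h,-2h$ needed in case (i).

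The step I expect to be the main obstacle is showing that the continuation of these paths ends at negative action while staying in $\{f<c\}$. After the full deletion ($s=b$), the action is
\[
 K-kaE,\qquad K=\int_0^a|q'|^2\dd ,
\]
so all but one quarter-arc has been replaced by waiting at a point of energy $E$, where the Lagrangian equals $-E$. I would then continue monotonically. First, lengthen the waiting time by compressing the last quarter-arc, that is, replace it by the straight segment from $q(0)$ to $0$ traversed in a time $\sigma$. I would check through an explicit inequality, again based on the energy identity $K\le aE\cdot(\text{const})$ along the quarter orbit, that this can be done with the action nonincreasing and eventually negative. If monotonicity fails, the fallback is weaker but sufficient: I only need the action to stay below $c$, and the accumulated loss already gives slack $4\int_0^b|q'|^2$, which I would try to use.

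Once the paths are in place the conclusion is formal. For $k=3$, case (i) of Lemma~\ref{lem:local-exits} applies with $d=2$. For $k\ge5$, case (ii) applies with $U$ alone and the negative two-plane from Lemma~\ref{lem:cutoff}. In either case every point of $O\cap\{\psi_T<c\}$ joins a negative action point inside $\{\psi_T<c\}$.
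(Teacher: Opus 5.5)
Your overall route matches the paper's: the same surgery $U_s$ with exact loss $-\int_0^s|q'|^2\dd$ and tangent $h$, a companion path $W_s$ with tangent $-2h$ when $k=3$, and then case (i) or (ii) of Lemma~\ref{lem:local-exits}. As written, however, the proposal has a gap at the point you yourself call the main obstacle. You never show that the surgery paths end at negative action, and both cases of Lemma~\ref{lem:local-exits} require this. Your fallback, keeping the action below $c$, addresses the wrong issue: that is already guaranteed for every $s>0$ by the exact loss formula. Your proposed continuation, compressing the last quarter-arc into a straight segment, is unverified. Shrinking the traversal time $\sigma$ makes the kinetic term $|q(b)|^2/(2\sigma)$ blow up, so monotonicity is doubtful.

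The missing observation takes one line, and it is exactly where nonnegativity of $V$ enters. Energy conservation gives $|q'(t)|^2=2\bigl(E-V(q(t))\bigr)\le 2E$, hence $K=\int_0^a|q'|^2\dd\le 2aE$ and
\[
 I_L(U_b)=K-kaE\le-(k-2)aE<0\qquad(k\ge3).
\]
So $U|_{[0,b]}$ already ends in $\{I_L<0\}$, and no continuation is needed.

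For $k=3$ you should also make $W$ precise. Insert a wait of length $2s$ at $q(0)$ and delete the symmetric window $[b-s,b+s]$ around the turning point $b=2a$. That is, set $W_s=q(0)$ on $[0,2s]$, $W_s(t)=q(t-2s)$ on $[2s,b+s]$, and $W_s(t)=q(t)$ on $[b+s,3a]$. The junction matches because $q(b-s)=q(b+s)$. This is what produces the factor $2$ in the tangent $-2h$. It also gives loss $-\int_{b-s}^{b+s}|q'|^2\dd$ and, by the same bound, $I_L(W_a)=K-3aE\le-aE<0$.

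A minor point: your justification of $U_s=q+sh+o(s)$ covers only $[0,b-s]$. The waiting window $[b-s,b]$ needs its own estimate using $q'(b)=0$; there the error is $O(s^{3/2})$ in $H^1$.
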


We prove this proposition in the next three subsections.
By Lemma~\ref{lem:periods}, write
\begin{equation}\label{eq:iteration-parameters}
 \begin{gathered}
 \tau=T/k,\qquad k=2m+1\ge3,\qquad a=\tau/4=\frac{T}{4k},\\
 L=ka=\frac T4,\qquad b=2ma=(k-1)a=L-a.
 \end{gathered}
\end{equation}
Thus $[0,L]$ contains $k$ primitive quarter-periods, and $[b,L]$ is its
last primitive quarter-period.

Regard $q$ as its restriction to $X_L$. It is a critical point of $I_L$ with critical value $c/4$. Set
\begin{equation}\label{eq:turns-energy}
 A_0=q(0),\qquad B_0=q(b)=(-1)^m A_0,
 \qquad E=\frac12|q'(t)|^2+V(q(t)).
\end{equation}

The energy conservation and the symmetries imply
\begin{equation}\label{eq:brake-identities}
 q'(0)=q'(b)=0,\quad q(L)=0,\quad
 q(b+s)=q(b-s),\quad V(A_0)=V(B_0)=E.
\end{equation}

Since $E=|q'(L)|^2/2\ge0$. If $E=0$, then $q(L)=q'(L)=0$, and the uniqueness, together with $\nabla V(0)=0$, gives the zero solution.
So the energy $E$ is positive.

Write
\begin{equation}\label{eq:density}
 \ell(t)=\frac12|q'(t)|^2-V(q(t))=|q'(t)|^2-E,
 \qquad J=\int_0^a\ell(t)\dd.
\end{equation}

Time reflection gives $\ell(-t)=\ell(t)$. Then, by half-antiperiodicity with
primitive half-period $\tau/2=2a$ and the evenness of $V$, we have
\[
 \ell(2a-t)=\ell(t),\qquad
 \int_a^{2a}\ell(t)\dd=\int_0^a\ell(t)\dd=J.
\]
Thus each successive interval of length $a$ has action $J$, including $[b,L]$. Therefore $I_L(q)=kJ$.
Since $V\ge0$, it yields \(\tfrac12|q'(t)|^2=E-V(q(t))\le E\). Integrating over $[0,a]$ gives
\begin{equation}\label{eq:quarter-bound}
 J=\int_0^a|q'(t)|^2\dd-aE\le2aE-aE=aE.
\end{equation}

\subsection{The first action-decreasing surgery}

For $0\le s\le b$, define
\begin{equation}\label{eq:U-path}
 U_s(t)=
 \begin{cases}
  q(t+s),&0\le t\le b-s,\\
  B_0,&b-s\le t\le b,\\
  q(t),&b\le t\le L.
 \end{cases}
\end{equation}
One can easily check that $U_s$ is well-defined, $U_s\in X_L$ and $U_0=q$.

The path $s\mapsto U_s$ is continuous in $H^1$. Indeed, if $s_j\to s$, both $U_{s_j}(t)\to U_s(t)$ and $d_{s_j}(t)\to d_s(t)$
hold except at the finitely many limiting junctions, where $d_s(t)$ denotes the piecewise weak derivative of $U_s(t)$. Note that on
$[0,L]$ they satisfy the uniform bounds
\[
 |U_{s_j}(t)|,|U_s(t)|\le\max_{[0,L]}|q|,
 \qquad |d_{s_j}(t)|,|d_s(t)|\le\max_{[0,L]}|q'|.
\]
Then the dominated convergence theorem gives
\[
 \|U_{s_j}-U_s\|_{H^1}^2
 =\int_0^L|U_{s_j}-U_s|^2\dd+
   \int_0^L|d_{s_j}-d_s|^2\dd\longrightarrow0.
\]

By the definition (\ref{eq:U-path}) of $U_s$, $V(B_0)=E$ and $\ell(t)=|q'(t)|^2-E$, we obtain
 \begin{align}
 I_L(U_s)-I_L(q)
 &=\int_0^{b-s}\ell(t+s)\dd-sV(B_0)+\int_b^L\ell(t)\dd-\int_0^L\ell(t)\dd\notag\\
 &=\int_s^b\ell(r)\,\mathrm dr-sE+\int_b^0\ell(t)\dd=-\int_0^s|q'(t)|^2\dd<0,\quad\forall s>0,
 \label{eq:U-loss}
\end{align}
where the last strict inequality holds because the solution $q(t)$ is nonconstant.

When $s=b$, by \eqref{eq:quarter-bound}, it yields
\begin{equation}\label{eq:U-negative}
 I_L(U_b)=J-bE\le aE-(k-1)aE=-(k-2)aE<0.
\end{equation}

The first-order tangent of this path is the field $h$ in \eqref{eq:cutoff-h}:
\begin{equation}\label{eq:U-tangent}
 U_s=q+sh+o(s)\qquad\text{in }H^1([0,L],\R^n)
 \quad(s\downarrow0).
\end{equation}


Set $R_s\equiv U_s-q-sh$. Let $M=\max_{[0,L]}|\ddot{q}|$ and $\omega(\delta)=\sup\{|\ddot{q}(r)-\ddot{q}(t)|: r,t\in[0,L],\ |r-t|\le\delta\}$.
Obviously, there holds $\omega(\delta)\to0$. Since $q'(b)=0$, the weak derivative of $h$ equals $\ddot{q}$ on $(0,b)$
and $0$ on $(b,L)$. Next we will estimate $R_s$ on three different intervals.

\emph{i) On $[0,b-s]$.} Here $h=q'$, and the fundamental theorem gives
\[
 \begin{aligned}
 R_s(t)&=\int_0^s\bigl(q'(t+r)-q'(t)\bigr)\,\mathrm dr,
                       &|R_s(t)|&\le\tfrac12Ms^2,\\
 R_s'(t)&=\int_0^s\bigl(\ddot{q}(t+r)-\ddot{q}(t)\bigr)\,\mathrm dr,
                       &|R_s'(t)|&\le s\omega(s).
 \end{aligned}
\]
This part contributes at most
$\sqrt b\,(Ms^2/2+s\omega(s))$ to the $H^1$-norm.

\emph{ii) On $[b-s,b]$.} First,
\[
 q'(t)=-\int_t^b\ddot{q}(r)\,\mathrm dr,\qquad
 |q'(t)|\le M(b-t)\le Ms.
\]
Integrating this estimate on $[t,b]$ again gives
\[
 |q(b)-q(t)|\le\int_t^b M(b-r)\,\mathrm dr
              =\tfrac12M(b-t)^2\le\tfrac12Ms^2.
\]
In this interval, almost everywhere there holds
\[
 R_s(t)=q(b)-q(t)-sq'(t),\qquad
 R_s'(t)=-q'(t)-s\ddot{q}(t),
\]
which implies that $|R_s(t)|\le\tfrac32Ms^2$ and $|R_s'(t)|\le2Ms$. Hence
\[
 \|R_s\|_{L^2(b-s,b)}\le\tfrac32Ms^{5/2},\qquad
 \|R_s'\|_{L^2(b-s,b)}\le2Ms^{3/2}.
\]
The total contribution of this part to the $H^1$-norm is $O(s^{3/2})$.

\emph{iii) On $[b,L]$.} One has $U_s=q$ and $h=0$, so $R_s=R_s'=0$
almost everywhere.

Combining the three intervals, for a constant $C$
independent of small $s>0$,
\[
 \|U_s-q-sh\|_{H^1(0,L)}
      \le C\bigl(s^2+s\omega(s)+s^{3/2}\bigr)=o(s).
\]
This establishes \eqref{eq:U-tangent} in the Hilbert norm.

\subsection{The opposite surgery for a triple iterate}

Suppose $k=3$, so $b=2a$ and $L=3a$. For $0\le s\le a$, define
\begin{equation}\label{eq:W-path}
 W_s(t)=
 \begin{cases}
  A_0,&0\le t\le2s,\\
  q(t-2s),&2s\le t\le2a+s,\\
  q(t),&2a+s\le t\le3a.
 \end{cases}
\end{equation}
At the first junction the values match because $q(0)=A_0$. At the second one they match because $q(2a-s)=q(2a+s)$ by the reflection about $2a$. Thus $W_s\in X_{3a}$ and $W_0=q$. The same argument as for $U_s$ proves $H^1$ continuity.

By the definition (\ref{eq:W-path}) of $W_s$ and $V(A_0)=E$, we obtain
\begin{align}
 I_{3a}(W_s)-I_{3a}(q)&=-2sE+\int_0^{2a-s}\ell(r)\,\mathrm dr +\int_{2a+s}^{3a}\ell(t)\dd-\int_{0}^{3a}\ell(t)\dd\notag\\
  &=-2sE-\int_{2a-s}^{2a+s}\ell(t)\dd=-\int_{2a-s}^{2a+s}|q'(t)|^2\dd<0, \quad\forall s>0.
 \label{eq:W-loss}
\end{align}

When $s=a$, it is easy to see that
\begin{equation}\label{eq:W-negative}
 I_{3a}(W_a)=J-2aE\le-aE<0.
\end{equation}

The first-order tangent of this path is the field $-2h$ in \eqref{eq:cutoff-h}:
\begin{equation}\label{eq:W-tangent}
 W_s=q-2sh+o(s)\qquad\text{in }H^1([0,3a],\R^n)
 \quad(s\downarrow0).
\end{equation}

Let $R_s^W\equiv W_s-q+2sh$, $b=2a$, and take $0<s<a/2$.
Use the constant $M$ and $\omega(\delta)$ from the proof for $U$, but being replaced by the interval $[0,3a]$.

On $[2s,b]$, by the fundamental theorem, we obtain
\[
 \begin{aligned}
 R_s^W(t)&=q(t-2s)-q(t)+2sq'(t),
                 &|R_s^W(t)|&\le2Ms^2,\\
 (R_s^W)'(t)&=q'(t-2s)-q'(t)+2s\ddot{q}(t),
                 &|(R_s^W)'(t)|&\le2s\omega(2s).
 \end{aligned}
\]

On $[0,2s]$, one has $q'(0)=0$, $|q'(t)|\le Mt$, and
$|q(t)-q(0)|\le Mt^2/2$. Hence
\[
 R_s^W=q(0)-q+2sq',\qquad (R_s^W)'=-q'+2s\ddot{q},
\]
which establish the bounds $|R_s^W|\le6Ms^2$ and $|(R_s^W)'|\le4Ms$.

On $[b,b+s]$, the cutoff $h$ is zero. Both $t$ and $t-2s$ lie within
$2s$ of $b$, where $q'(b)=0$. Thus
\[
 R_s^W(t)=q(t-2s)-q(t),\qquad
 |R_s^W(t)|\le4Ms^2,\qquad |(R_s^W)'(t)|\le2Ms.
\]

Note that the error is zero for $t\ge b+s$. Since two intervals
$[0,2s]$ and $[b,b+s]$ have total length $3s$, their $H^1$ contribution
is $O(s^{3/2})$. The contribution on $[2s,b]$ is $O(s^2+s\omega(2s))$. Thus
\[
 \|W_s-q+2sh\|_{H^1(0,3a)}
        \le C\bigl(s^2+s\omega(2s)+s^{3/2}\bigr)=o(s),
\]
This establishes \eqref{eq:W-tangent} in the Hilbert norm.

\subsection{Exclusion and completion of the proof}

\begin{proof}[Proof of Proposition~\ref{prop:exit}]
Consider the functional $I_L$ defined on $X_L$ with positive critical value $I_L(q)=c/4$. Its Hessian is the form $B$ in \eqref{eq:interval-hessian}. By Lemma~\ref{lem:cutoff}, the field $h$ is isotropic and not in $\rad(B)$.

When $k=3$, \eqref{eq:U-path}--\eqref{eq:W-tangent} give two paths that lie strictly below the critical value for positive parameter, end at negative values, and have tangents $h$ and $-2h$. By the case~(i) of Lemma~\ref{lem:local-exits} with $d=2$, every sufficiently nearby point of smaller action can be joined, below $I_L(q)$, to a point of negative action.

When $k\ge5$, Lemma~\ref{lem:cutoff} supplies a negative two-plane. By \eqref{eq:U-loss}, \eqref{eq:U-negative} and \eqref{eq:U-tangent}, the path $U$ satisfies all conditions in case~(ii) of Lemma~\ref{lem:local-exits}. So the same conclusion follows.

Finally transport the neighborhood and all connecting paths through the bounded reflection isomorphism \eqref{eq:quarter-space}. The identity $\psi_T=4I_L$ preserves strict inequalities and the sign of the endpoint action. This proves the proposition on $\cS_T$.
\end{proof}

\begin{proof}[Proof of Theorem~\ref{thm:main}]
Fix an integer $n\ge1$, a potential satisfying $(\mathrm{V1'})$ and
$(\mathrm{V2})$--$(\mathrm{V5})$, and $T>0$.
Corollary~\ref{cor:selected-solution} gives a nonconstant classical
solution $q\in\cS_T$ satisfying \eqref{eq:main-selection}.
Let $\tau$ be its minimal positive period, whose existence follows from
Lemma~\ref{lem:periods}.

Suppose $\tau<T$. Proposition~\ref{prop:exit} gives an open neighborhood
$O$ of $q$ with the stated exit property. Choose $\delta>0$ such that
$B_{H^1}(q,\delta)\cap\cS_T\subset O$. Since
$q\in\overline{\cC_T}$, there is
\[
 u\in\cC_T\cap B_{H^1}(q,\delta)\subset O\cap\cC_T.
\]
In particular $\psi_T(u)<c_T$. By the definition of $\cC_T$, there is a
continuous path $\eta_0:[0,1]\to\{\psi_T<c_T\}$ from $0$ to $u$.
Proposition~\ref{prop:exit} gives a continuous path
$\eta_1:[0,1]\to\{\psi_T<c_T\}$ from $u$ to a point $e$ with
$\psi_T(e)<0$. Define
\[
 \gamma(s)=
 \begin{cases}
  \eta_0(2s),&0\le s\le1/2,\\
  \eta_1(2s-1),&1/2\le s\le1.
 \end{cases}
\]
Obviously, $\gamma$ is $H^1$-continuous.
It starts at zero, ends at $e$, and belongs to $\Gamma_T$.
Every point on this path has action less than $c_T$.
The continuous function $\psi_T\circ\gamma$ attains its maximum at some
$s_*\in[0,1]$. Thus, by the definition (\ref{eq:minimax}) of $c_T$, we obtain
\[
 c_T\le\max_{s\in[0,1]}\psi_T(\gamma(s))
     =\psi_T(\gamma(s_*))<c_T,
\]
a contradiction. Therefore $\tau=T$. The time-even and half-antiperiodic
identities follow from $q\in\cS_T$, and the positive action and classical
regularity follow from the selection and compactness lemmas.
\end{proof}

\begin{remark}\label{rem:scope}
The growth outside some ball and the assumptions near the orgin are used for mountain pass geometry and Palais--Smale compactness. Evenness of the potential gives the reflection space and the matching identities in the surgeries. Nonnegativity gives the negative action endpoint estimates \eqref{eq:U-negative} and \eqref{eq:W-negative}. The local argument uses the $C^2$ regularity.
\end{remark}

\medskip

{\bf AI disclosure.} ChatGPT was used to assist with developing the draft arguments, some proof strategies and language refinement. The authors take responsibility for the human verification and exposition.

\end{document}